\newtheorem{theorem}{Theorem}[section]
\newtheorem{lemma}[theorem]{Lemma}
\newenvironment{proof}{\rmfamily\upshape\mdseries\small{\noindent\normalsize\bfseries Proof:}}{\nopagebreak\rule{2mm}{2mm}\newline}
\newcommand{\Magma}{{\scshape Magma}}
\def\N{{\bf N}}
\def\C{{\bf C}}
\def\Q{{\bf Q}}
\def\R{{\bf R}}
\def\GF(#1){{\rm GF(}#1{\rm )}}
\def\ip<#1,#2>{{\langle#1,#2\rangle}}
\def\Irr{{\rm Irr}}
\def\IBr{{\rm IBr}}
\def\G{{\mathcal{G}}}
\def\E{{\mathcal{E}}}
\def\fru{{\sqrt{-1}}}
\def\m_i{{m_\R}}
\def\split{\!:\!}
\def\club{($\ast$)}
\def\qe{quasi-elementary}
\begin{document}
\title{An algorithm for computing Schur indices of characters}
\author{
W. R. Unger\thanks{Supported by ARC grants DP0452427, DP0772368} \\
School of Mathematics and Statistics, \\
University of Sydney, \\
Sydney, Australia \\
}
\maketitle

\begin{abstract}
We describe an algorithm for computing Schur
indices of irreducible characters of a finite group $G$,
based on computations within $G$ and its subgroups and with
their character tables.
The algorithm has been implemented within \Magma\ and
examples of its use are given. We also compare the output of the algorithm
on groups associated with sporadic simple groups with the results of W. Feit.
The differences are considered.
\end{abstract}

\section{Introduction}\label{sec:intro}
The notion of Schur index was introduced by Schur in 1906.
Suppose that $G$ is a finite group and $\chi$ an absolutely irreducible
character of $G$. Let $F$ be a field and let $\psi$ be the sum of the Galois
conjugates of $\chi$ over $F$, so $\psi$ takes values in $F$.
The minimal integer $m>0$ such that $m\psi$ is the
character of an $F$-representation of $G$ is the
\emph{Schur index of $\chi$ over $F$}, denoted $m_F(\chi)$.
When the characteristic of $F$ is not zero we have $m_F(\chi)=1$,
so we will only consider characteristic zero representations and their
characters when asking for a Schur index.

An alternative approach to the Schur index is by representation fields.
There exists a field $E$ extending $F(\chi)$ such that there is an
$E$-representation of $G$ affording $\chi$ and with $[E:F(\chi)] = m_F(\chi)$.
Furthermore, $m_F(\chi)$ is the minimum degree of any such field extension.

For further introduction to Schur indices see Chapter 10 of \cite{Isaacs}
or Chapter 38 of \cite{Huppert}. The above statements about the Schur index
are covered in both these references.

Since 1906 there has been much work on Schur indices published. A connection
with central simple algebras is laid out in Chapter 38 of \cite{Huppert},
and for number fields $F$ (also for $p$-adic number fields), the Schur index
$m_F(\chi)$ is the order of the corresponding element of the Brauer group of 
$F(\chi)$. In this context it is natural to consider Schur indices over
completions of $F(\chi)$, i.e. real numbers, complex numbers,
and $p$-adic number fields, in order to get the Schur index over the global
field. These local fields are easier to cope with.
A fundamental result of R.~Brauer \cite{Brauer51} and
E.~Witt \cite{Witt52} reduces us to considering groups of a special kind,
the \qe\ groups. This combination of reductions led to methods to compute
$m_F(\chi)$ in results from the 1960s to the 1980s.

With the availability of character tables for many simple groups in the ATLAS
of finite groups \cite{ATLAS} came the question of Schur indices of these
characters.
W.~Feit computed $p$-adic Schur indices for characters related to the
sporadic simple groups in \cite{Feit83,Feit96}.
Feit's main tool was the ordinary character table of each group.

Another use for a Schur indices is when constructing characteristic 0
matrix representations of a group. Combined with the character table, 
Schur indices can be used when proving
representations irreducible for instance, and give information on minimal
degree representation fields.

Despite this continuing interest in Schur indices, the author has been unable
to find any published algorithm for computing Schur indices of group characters.
This work describes such an algorithm and its implementation in
\Magma\ \cite{MagmaLanguage}.
The algorithm described here was implemented in \Magma\ V2.16, and returns
the Schur indices of the character $\chi$ over all completions of the rational
numbers $\Q$.
The initial approach is Feit's character table methods, then moving on to
consider \qe\ subgroups in the cases where this fails.
Feit's results for sporadic simple group characters have been checked using
the \Magma\ implementation, and some corrections are given here.

The author is part of the \Magma\ group's effort to check the character tables
of the ATLAS using \Magma's character table algorithm \cite{UngerChtr}.
In parallel with computing the character tables, Schur indices are also
being found and are available along with the character tables in \Magma\ V2.22.

This work began at the suggestion of John Cannon and Claus Fieker and I thank
them for their encouragment.
Claus Fieker has written a program which takes the character,
the Schur indices computed here, and a number field, and computes the Schur
index of the character over the number field.
I would also like to thank Gabi Nebe for much help with this project during two
visits to Sydney.

\section{Notation and Preliminaries}\label{sec:prelim}
Let $G$ be a finite group and $\chi\in\Irr(G)$, where $\Irr(G)$ denotes the
set of ordinary irreducible characters of $G$. Let $N$ be the exponent of $G$
and let $F$ be any field of characteristic zero. 
We will use $\zeta_n$ to denote a primitive $n$th root of unity.
For any positive integer $n$, $F(n)$ will be written to denote $F(\zeta_n)$. 
Let $F(\chi)\subseteq F(N)$ denote the
extension of $F$ generated by all the $\chi(g)$, $g\in G$.
The notation $\ip<\alpha,\beta>$ will be used for the inner product of
characters. We will always use $q$ to denote a possible prime divisor
of some Schur index, and $p$ to denote some place of \Q.
For integers $a$ and $b$, $(a,b)$ will denote their greatest common divisor.
The degree of a field extension $E/F$ will be denoted $[E:F]$.
We will use $m(\chi)$ to denote the Schur index over 
\Q\ and $m_p(\chi)$ for the Schur index over the $p$-adic numbers $\Q_p$.

We collect a number of results on Schur indices.
The following are among those stated in Chapter 10 of \cite{Isaacs}
and Chapter 38 of \cite{Huppert}.

\begin{theorem}\label{globalm} Suppose that $G$ is a finite group,
$\chi\in\Irr(G)$, and that $F$ is a field of characteristic zero.
\begin{enumerate}
\item $m_F(\chi) = m_{F(\chi)}(\chi)$.
\item If $E$ is any extension of $F$, then $m_E(\chi)$ divides $m_F(\chi)$.
\item If $E$ is a finite extension of $F$, then $m_F(\chi)$ divides
$m_E(\chi)[E:F]$.
\item If $\psi$ is the character of any $F$-representation of $G$, then
$m_F(\chi)$ divides $\ip<\chi,\psi>$.
\end{enumerate}
\end{theorem}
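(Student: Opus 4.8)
The plan is to deduce all four parts from two facts available in the cited references: the representation-field characterization recalled above, and the standard dictionary identifying $m_F(\chi)$ with the index in the Brauer group $\mathrm{Br}(F(\chi))$ of the simple component of $F(\chi)[G]$ belonging to $\chi$. To fix notation, put $K=F(\chi)$, let $e_\chi$ be the central primitive idempotent of $K[G]$ belonging to $\chi$, and let $A_K(\chi)=K[G]e_\chi$; since $\chi$ is absolutely irreducible with all values in $K$, this $A_K(\chi)$ is a central simple $K$-algebra, and the dictionary says $m_F(\chi)$ is the index of its class in $\mathrm{Br}(K)$.

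Part (1) is essentially immediate from the representation-field statement: a field $E$ carries a representation affording $\chi$ only if $\chi(g)\in E$ for every $g$, that is, only if $E\supseteq F(\chi)$; hence the collection of field extensions of $F(\chi)$ that afford $\chi$ is the same whether one starts over $F$ or over $F(\chi)$ (and over $F(\chi)$ the Galois sum defining the Schur index is simply $\chi$ itself), so $m_F(\chi)$ and $m_{F(\chi)}(\chi)$ are both the least value of $[E:F(\chi)]$ over that collection. For parts (2) and (3) I would first record a base-change lemma: because $\chi$ is absolutely irreducible with values in $K$, the idempotent $e_\chi$ remains central and primitive in $K'[G]$ for every field $K'\supseteq K$, so $A_{K'}(\chi)\cong A_K(\chi)\otimes_K K'$ and the Brauer class of $A_{K'}(\chi)$ is the restriction of that of $A_K(\chi)$. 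Taking $K'=E(\chi)$ and invoking part (1) for $E$, $m_E(\chi)$ is the index of this restricted class. Since the index of a central simple algebra never increases under extension of the ground field, part (2) follows for any extension $E/F$. For part (3) one chooses a maximal subfield $M$ of the division algebra underlying $A_{E(\chi)}(\chi)$, so that $M$ splits $A_{E(\chi)}(\chi)$ with $[M:E(\chi)]=m_E(\chi)$, notes that $M$ then also splits $A_K(\chi)$, whence $m_F(\chi)$ divides $[M:K]=m_E(\chi)\,[E(\chi):F(\chi)]$, and finally observes that $F(\chi)/F$ is abelian (being contained in a cyclotomic extension) so that $[E(\chi):F(\chi)]$ divides $[E:F]$.

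For part (4), after extending scalars from $F$ to $K$ one may assume $\psi$ is afforded by a $K[G]$-module $V$; writing $A_K(\chi)\cong M_n(D)$ with $D$ a central division $K$-algebra of index $m:=m_F(\chi)$, the summand $Ve_\chi$ is an $M_n(D)$-module, hence a direct sum of some number $c$ of copies of the unique simple module, whose $G$-character — as one sees by extending scalars to an algebraic closure of $K$, over which $D$ becomes a matrix algebra $M_m$ — equals $m\chi$. Consequently $\ip<\chi,\psi>=cm$ is a multiple of $m_F(\chi)$.

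The only genuinely substantive input is the dictionary between $m_F(\chi)$ and $\mathrm{ind}\,A_K(\chi)$ (for part (1) the weaker representation-field statement already suffices); once that is granted, what remains is the bookkeeping above. I expect the points requiring the most care to be the survival of the central primitive idempotent under base change, the divisibility $[E(\chi):F(\chi)]\mid[E:F]$ used in part (3), and, in part (2), the behaviour of the Schur index under field extensions that need not be algebraic.
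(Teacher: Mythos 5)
Your argument is correct, and there is nothing to compare it against in detail: the paper gives no proof of this theorem, simply citing Chapter 10 of Isaacs and Chapter 38 of Huppert, and your derivation via the identification of $m_F(\chi)$ with the index of the simple component $F(\chi)[G]e_\chi$ in the Brauer group of $F(\chi)$ is exactly the standard treatment found in those references (including the key points you flag: stability of $e_\chi$ under base change, the divisibility $[E(\chi):F(\chi)]\mid[E:F]$ coming from $F(\chi)/F$ being Galois, and the simple $M_n(D)$-module affording $m\chi$).
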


The next result records upper bounds for $m(\chi)$, most found in the works 
cited above.
The names attached are attributions given by these sources.

\begin{theorem}\label{globalU}
Suppose that $G$ is a finite group with exponent $N$,
$\chi\in\Irr(G)$, and that $F$ is a field of characteristic zero. Let
$m = m(\chi)$.
\begin{enumerate}
\item $m$ divides $\chi(1)$.
\item (Feit) $m$ divides $|G|/\chi(1)$.
\item (Fein-Yamada) $m$ divides $N$.
\item (Benard-Schacher) The field $\Q(\chi)$ contains a primitive $m$th root
of unity.
\item (Solomon)
$m$ divides $\varphi(2p_1p_2\cdots p_\ell)$, where $p_1,p_2,\dots p_\ell$
are the distinct primes dividing the order of $G$.
\item (Roquette) If $G$ is $q$-group and either $q$ is odd or $\fru\in F(\chi)$,
then $m_F(\chi) = 1$.
\item If $q$ is a prime such that the Sylow q-subgroups of $G$ are
elementary abelian, then $q$ does not divide $m$.
\item If $q^a$ is a prime power divisor of $m$ with $q^a > 2$,
then there exists a prime $p$ such that $q^a$ divides $p-1$ and $G$ contains
an element of order $pq^a$.
\end{enumerate}
\end{theorem}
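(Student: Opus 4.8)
The plan is to split the eight items into three tiers according to how much machinery each needs. Item~1 falls straight out of Theorem~\ref{globalm}(4): the regular representation of $G$ is defined over $\Q$ and its character $\rho$ satisfies $\ip<\chi,\rho>=\chi(1)$, so $m$ divides $\chi(1)$. Items~6 (Roquette) and~4 (Benard--Schacher) are self-contained named theorems that I would quote rather than reprove, following Chapter~10 of \cite{Isaacs}: Roquette's by induction on $|G|$ via Clifford theory and the structure of $q$-groups; Benard--Schacher by expressing the class of the relevant division algebra $D$ over $\Q(\chi)$ through its local invariants, using that $D$ is a Schur algebra so $\mathrm{Gal}(\Q(\chi)/\Q)$ acts on it and permutes the places of $\Q(\chi)$, concluding that Galois-conjugate places carry invariants of equal order, and then (with a local argument at the ramified places) forcing a primitive $m$th root of unity into $\Q(\chi)$.

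The engine for items~3, 5, 7 and~8 is the Brauer--Witt theorem. By Brauer--Witt the simple component of $\Q(\chi)G$ affording $\chi$, equivalently the division algebra $D$ with centre $\Q(\chi)$ and index $m$, is Brauer-equivalent to a cyclotomic algebra $(\beta,\Q(\chi)(\zeta_r)/\Q(\chi))$ with $\beta$ valued in $\langle\zeta_r\rangle$, and similarly over each completion. Such a crossed product is split by $\Q(\chi)(\zeta_r)$, so $m$ divides $[\Q(\chi)(\zeta_r):\Q(\chi)]$, hence $\varphi(r)$. For item~3 one shows $r$ may be taken to divide $N$ and then applies the sharper Fein--Yamada local analysis to improve $m\mid\varphi(N)$ to $m\mid N$; for item~5 (Solomon) one shows $r$ may be taken to divide $2p_1\cdots p_\ell$, giving $m\mid\varphi(2p_1\cdots p_\ell)$ at once; for item~7 the $q$-local reduction (Witt--Berman/Roquette) lands on characters of $q$-elementary subgroups $C\times Q$ with $C$ cyclic of order prime to $q$, and writing such a character as $\lambda\times\mu$ with $\mu\in\Irr(Q)$ necessarily linear (as $Q$ is elementary abelian) shows the $q$-part of $m$ is $1$; for item~8 a local-invariant computation for the cyclotomic algebra over $\Q_p$ shows that $q^a>2$ can divide the index at a rational prime $p$ only when $q^a\mid p-1$, and that the contributing \qe\ subgroup of $G$ must then contain an element of order $pq^a$.

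The remaining item~2 is Feit's bound, and it is also where I expect the main difficulty. Here I would return to Theorem~\ref{globalm}(4): it suffices to exhibit a $\Q$-representation $\psi$ of $G$ with $\ip<\chi,\psi>=|G|/\chi(1)$, the natural candidate being built from the action of $G$ on the $\chi$-component of the group algebra $\Q G$ (for instance the conjugation action on the simple component attached to $\chi$). Forcing the multiplicity of $\chi$ in $\psi$ to be exactly the codegree $|G|/\chi(1)$ is the delicate step, and together with the local invariant computations in item~8 it is the part where the genuine content sits; the crude estimates alone yield only $m\mid\chi(1)$ and $m\mid\varphi(N)$, so for the sharper divisibilities I would lean on the original arguments of Feit, Fein--Yamada and Solomon rather than attempt fresh proofs.
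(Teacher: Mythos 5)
The paper's own ``proof'' of this theorem is essentially a pointer to the literature: items 1--7 are stated in the two cited texts (Isaacs, Chapter~10, and Huppert, Chapter~38) and item~8 is referred to Schmid. So for most items your strategy of quoting the named theorems coincides with what the paper actually does, your argument for item~1 (the regular character $\rho$ is afforded by a $\Q$-representation and $\ip<\chi,\rho>=\chi(1)$, then apply part~4 of Theorem~\ref{globalm}) is the standard one and correct, and your Brauer--Witt/cyclotomic-algebra outline for items 3, 5 and 8 is the right framework, though at the level of detail given it is a programme rather than a proof, as you acknowledge.

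Two of the places where you do commit to an argument have genuine problems. For item~2, note that a character $\psi$ of a $\Q$-representation with $\ip<\chi,\psi>=|G|/\chi(1)$ exists \emph{if and only if} $m$ divides $|G|/\chi(1)$: by Theorem~\ref{globalm}(4) every such $\psi$ has $\ip<\chi,\psi>$ a multiple of $m$, and conversely $m\sum_\sigma\chi^\sigma$ realises multiplicity exactly $m$, so the achievable multiplicities are precisely the nonnegative multiples of $m$. Exhibiting your witness $\psi$ is therefore equivalent to the statement being proved, and the ``natural candidate'' you name fails: the conjugation action on the simple component has character $\chi\bar\chi$, and already for the degree~2 character of $Q_8$ one computes $\ip<\chi\bar\chi,\chi>=0$ while $|G|/\chi(1)=4$. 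Feit's bound requires a genuinely different argument. For item~7, the Brauer--Witt reduction lands on \qe\ subgroups $A\split Q$ ($A$ cyclic normal, $Q$ a $q$-group acting on it), not on direct products $C\times Q$, and even when $Q$ is elementary abelian the irreducible characters of $A\split Q$ need not factor as $\lambda\times\mu$ with $\mu$ linear: the nonabelian group of order $21$ is \qe\ at $3$ with elementary abelian Sylow $3$-subgroups and has irreducible characters of degree $3$. So the one-line argument you give does not apply to the groups the reduction actually produces; the content of item~7 is showing that the resulting cyclic cyclotomic algebra splits, and that step is missing.
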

\begin{proof}
The last item is the only one not given in the two texts cited.
For this see \cite{Schmid85}, section 7, application 3.
\end{proof}

The results above are aimed at showing that $m_F(\chi)$ is small,
and it seems that it is quite rare to find a character $\chi$ that
has $m(\chi)>2$.
On the other hand, it has been shown that every positive integer arises as
the Schur index of some character. 
For a relatively simple construction of a character having Schur index $n > 1$
over $\Q$, let $p$ be a prime such that $n$ divides $p-1$ but n is coprime
to $(p-1)/n$ (there are infinitely many such primes as any $p$ congruent
to $n+1$ modulo $n^2$ will do), take $r$ to have order $n$
modulo $p$, and let
$$G = \langle x,y \,\vert\, x^p=1, y^{n^2}=1, y^{-1}xy = x^r \rangle.$$
Take $C$ to be the subgroup of $G$ generated by $x$ and $y^n$. Then $C$
is a cyclic normal subgroup of $G$, and we take $\chi = \lambda^G$, where 
$\lambda$ is any faithful linear character of $C$. It follows from Lemma~3
of \cite{Turull} that $m(\chi) = n$. In \cite{Lorenz72} there is a construction
for a character with Schur index $n$ over a given number field.

\section{Reductions}
\subsection{Subgroups and their characters}\label{sec:sub}
A standard approach to Schur indices is to examine characters of subgroups
of $G$. In the early 1950s Brauer and Witt both showed that Schur index
computations for a general finite group could be reduced to the case where
the group is of special type, a \emph{\qe} group.

If $q$ is a prime, we say that a finite group $G$ is \emph{\qe\ at $q$} when
$G$ has a cyclic normal subgroup $A$ such that
$G/A$ is a $q$-group. This is equivalent to $G$ having a cyclic normal
$q$-complement. We say that $G$ is \emph{\qe} when $G$
is \qe\ at $q$ for some prime $q$.
The following lemma gives some useful easy facts about \qe\ groups and
their characters.
\begin{lemma}\label{elementary}
Let $q$ be a rational prime and suppose that $G$ is \qe\ at $q$.
\begin{enumerate}
\item Every subgroup and quotient group of $G$ is \qe\ at $q$.
\item If $\chi\in\Irr(G)$ then both $\chi(1)$ and $m_F(\chi)$ are powers of $q$.
\end{enumerate}
\end{lemma}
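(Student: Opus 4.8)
The plan is to prove the two parts of Lemma~\ref{elementary} separately, using the defining property that $G$ has a cyclic normal subgroup $A$ with $G/A$ a $q$-group.

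For part (1), I would first observe that the statement about quotients is immediate: if $N\trianglelefteq G$, then $AN/N$ is cyclic (being a quotient of $A$) and normal in $G/N$, and $(G/N)/(AN/N)\cong G/AN$ is a quotient of the $q$-group $G/A$, hence a $q$-group. For subgroups, let $H\le G$ and set $B=H\cap A$. Then $B$ is cyclic and normal in $H$, and $H/B=H/(H\cap A)\cong HA/A\le G/A$, so $H/B$ is a $q$-group. Thus $H$ is \qe\ at $q$. The equivalence with having a cyclic normal $q$-complement, mentioned just before the lemma, can be invoked if convenient but is not needed for the argument.

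For part (2), the statement about $\chi(1)$ follows from Clifford theory: let $\lambda$ be a constituent of $\chi_A$. Since $A$ is abelian, $\lambda$ is linear, and $\chi(1)=e\cdot t\cdot\lambda(1)=et$, where $t=[G:I_G(\lambda)]$ is the size of the orbit of $\lambda$ under $G$ and $e=\ip<\chi_{I_G(\lambda)},\lambda^*>$ for the Clifford correspondent; both $t$ and $e$ divide $[G:A]$ (for $t$ this is Orbit--Stabilizer, for $e$ one uses that $e^2$ divides $[I_G(\lambda):A]$), which is a power of $q$, so $\chi(1)$ is a power of $q$. For the Schur index, I would use Theorem~\ref{globalU}(1)--(2): $m_F(\chi)$ divides $m(\chi)$ by Theorem~\ref{globalm}(2) once we pass to $\Q(\chi)\subseteq F(\chi)$ and note Theorem~\ref{globalm}(1); actually the cleanest route is Theorem~\ref{globalU}(2), Feit's bound, giving that $m(\chi)$ divides $|G|/\chi(1)$, combined with the fact that $m(\chi)$ divides $\chi(1)$ by Theorem~\ref{globalU}(1). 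Hence $m(\chi)$ divides $(\chi(1),|G|/\chi(1))$; but $\chi(1)$ is a power of $q$, and I claim $|G|/\chi(1)$ has $q$-part dividing $[G:A]$, so the only prime that can divide $m(\chi)$ is $q$. Finally $m_F(\chi)$ divides $m(\chi)$ by Theorem~\ref{globalm}(2) (after reducing to $F(\chi)$ via part (1) of that theorem), so $m_F(\chi)$ is a power of $q$ as well.

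The main obstacle is the bookkeeping in part (2): one must verify carefully that both the ramification index $e$ and the orbit length $t$ in the Clifford decomposition of $\chi_A$ are $q$-powers, i.e. that $\chi(1)$ divides $[G:A]$, and that correspondingly the $q'$-part of $|G|$ is $|A|/|\ker|$-controlled so that $|G|/\chi(1)$ contributes no new primes to the gcd with $\chi(1)$. An alternative, perhaps slicker, approach to the Schur-index half is to avoid Clifford theory and instead cite Theorem~\ref{globalU}(8): any prime power $q_1^a>2$ dividing $m(\chi)$ forces an element of order $pq_1^a$ in $G$ for some prime $p$; since $G$ is \qe\ at $q$, an easy argument on element orders (elements of $G$ have order dividing $|A|\cdot q^k$ with the $q'$-part coming from $A$) shows $q_1=q$ unless $m(\chi)\le 2$, and the case $m(\chi)=2$ is handled directly since $2$ must then be $q$ (a \qe\ group at an odd prime has a normal Sylow $2$-subgroup inside the cyclic $A$, forcing real Schur indices to be trivial by Theorem~\ref{globalU}(6) applied suitably). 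I would present whichever of these the author's proof in fact uses, defaulting to the Feit-bound argument as it is shortest.
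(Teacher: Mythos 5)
The paper offers no proof of this lemma at all---it is introduced as ``some useful easy facts'' and stated bare---so there is nothing to compare your argument against; I can only assess it on its own terms, and it is correct. Part (1) is exactly the standard argument ($H\cap A$ and $AN/N$ serve as the cyclic normal subgroups with $q$-group quotients). In part (2), your Clifford-theoretic verification that $\chi(1)$ divides $[G:A]$ is sound (it is in effect a proof of Ito's theorem for the abelian normal subgroup $A$), and both $e$ and $t$ do divide $[I_G(\lambda):A]$ and $[G:I_G(\lambda)]$ respectively. The Schur-index half, however, is over-engineered: once $\chi(1)$ is known to be a power of $q$, the single divisibility $m_F(\chi)\mid m(\chi)\mid\chi(1)$ (Theorem~\ref{globalm}(2) with $\Q\subseteq F$, then Theorem~\ref{globalU}(1)) finishes immediately; the Feit bound $m(\chi)\mid |G|/\chi(1)$, the gcd, and the worry about the $q'$-part of $|G|/\chi(1)$ contribute nothing, and the remark about ``reducing to $F(\chi)$ via part (1)'' is likewise unnecessary. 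The sketched alternative via Theorem~\ref{globalU}(8) is the one place I would push back if you intended it as the actual proof: the proposed handling of the residual case $m(\chi)=2$ by ``Theorem~\ref{globalU}(6) applied suitably'' does not parse, since Roquette's result concerns $q$-groups, not groups with a normal Sylow $2$-subgroup. Since you explicitly default to the degree-bound argument, this aside does no harm, but I would delete it and state the two-line deduction cleanly.
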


The next lemma is (2A) of \cite{Brauer51}. It shows how
characters of subgroups can be useful when computing Schur indices.
\begin{lemma}\label{subgpq}
Let $G$ be a finite group with $\chi\in\Irr(G)$, let $F$ be a field
of characteristic zero, and fix a rational prime $q$. Let $U$ be a subgroup
of $G$ and $\eta\in\Irr(U)$ be a character such that $q$ does not divide 
$\ip<\eta^G, \chi>[F(\chi,\eta):F(\chi)]$. 
Then the $q$-part of $m_F(\chi)$ equals the $q$-part of $m_{F(\chi)}(\eta)$.
\end{lemma}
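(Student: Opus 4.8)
The plan is to work entirely with the $q$-parts of the relevant Schur indices and to play the divisibility statements of Theorem \ref{globalm} against one another. Write $E = F(\chi)$, so that by part (1) of Theorem \ref{globalm} we have $m_F(\chi) = m_E(\chi)$, and it suffices to compare the $q$-part of $m_E(\chi)$ with the $q$-part of $m_E(\eta)$ (note $E(\eta) = F(\chi,\eta)$). First I would produce an $E$-representation witnessing $m_E(\eta)$: by the representation-field description in the introduction there is a field $L \supseteq E(\eta)$ with $[L:E(\eta)] = m_E(\eta)$ admitting an $L$-representation affording $\eta$ (or rather the sum of its $E(\eta)$-conjugates, but over $E(\eta)$ the character $\eta$ is already one Galois orbit since $\eta$ takes values in $E(\eta)$). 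Inducing this representation up to $G$ gives an $L$-representation of $G$ whose character is $m_E(\eta)\,\eta^G$ up to summing Galois conjugates over $L$; in particular its character $\psi$ is afforded by an $L$-representation of $G$ and satisfies $\ip<\chi,\psi> = m_E(\eta)\,[L:E]\,\ip<\eta^G,\chi>/(\text{something})$ — the bookkeeping here is exactly the point where one must be careful about how $\chi$'s Galois orbit over $E$ (trivial) interacts with that of $\eta$ over $E$.

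More cleanly: by part (4) of Theorem \ref{globalm}, $m_E(\chi)$ divides $\ip<\chi,\psi>$ for \emph{any} character $\psi$ of an $E$-representation of $G$. Take the $E$-representation obtained by restricting scalars to $E$ from an $L$-representation of $U$ affording $\eta$ and then inducing to $G$; its character is $[L:E(\eta)]\,m_E(\eta)^{-1}\cdot(\text{Galois sum of }\eta)^G$ — let me instead take the concrete route. Let $\theta$ be the character of the induced-up $E$-representation of $G$ built from a representation of $U$ realizing the Schur index $m_E(\eta)$; then $\theta = m_E(\eta)\cdot(\sum_{\sigma}\eta^\sigma)^G$ where $\sigma$ runs over $\mathrm{Gal}(E(\eta)/E)$, so that $\ip<\chi,\theta> = m_E(\eta)\sum_\sigma \ip<(\eta^\sigma)^G,\chi> = m_E(\eta)\sum_\sigma\ip<(\eta^G)^\sigma,\chi>$, and since $\chi^\sigma = \chi$ this equals $m_E(\eta)\,|\mathrm{Gal}(E(\eta)/E)|\,\ip<\eta^G,\chi> = m_E(\eta)\,[F(\chi,\eta):F(\chi)]\,\ip<\eta^G,\chi>$. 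By hypothesis $q \nmid [F(\chi,\eta):F(\chi)]\ip<\eta^G,\chi>$, so the $q$-part of $\ip<\chi,\theta>$ equals the $q$-part of $m_E(\eta)$. Hence by part (4) the $q$-part of $m_F(\chi) = m_E(\chi)$ divides the $q$-part of $m_{F(\chi)}(\eta) = m_E(\eta)$.

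For the reverse divisibility I would run the analogous argument in the other direction: $m_E(\eta) = m_{E(\eta)}(\eta)$ divides $m_E(\eta)$ trivially, but what is needed is that $m_E(\eta)$ divides something built from $\chi$. Here use part (3) of Theorem \ref{globalm}: restrict an $E$-representation of $G$ realizing $m_E(\chi)$ (over the field $L'\supseteq E$ with $[L':E] = m_E(\chi)$) down to $U$, decompose, and extract the $\eta$-component; this shows $m_{E}(\eta)$, hence $m_{E(\eta)}(\eta)$, divides $[E(\eta):E]\cdot m_E(\chi)\cdot(\text{multiplicity})$, and by the hypothesis the $q$-part of the product of the two coprime-to-$q$ factors disappears, leaving the $q$-part of $m_{F(\chi)}(\eta)$ dividing the $q$-part of $m_F(\chi)$. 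Combining the two divisibilities gives equality of $q$-parts.

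The main obstacle I expect is the careful tracking of Galois orbits: $\eta$ lives over $E(\eta) = F(\chi,\eta)$, and when one induces or restricts one must correctly account for the factor $[F(\chi,\eta):F(\chi)]$ coming from the Galois conjugation of $\eta$ over $F(\chi)$, so that it appears precisely once and in the right place in the inner-product computation. Getting that factor right — and verifying that $\chi$ is fixed by the relevant Galois group so that $\sum_\sigma \ip<(\eta^G)^\sigma,\chi> = [F(\chi,\eta):F(\chi)]\ip<\eta^G,\chi>$ — is the crux; once the coprimality-to-$q$ hypothesis is applied to kill these extraneous factors, the two applications of Theorem \ref{globalm} close the argument routinely.
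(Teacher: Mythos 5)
The paper offers no proof of this lemma at all --- it is quoted directly as (2A) of Brauer's 1951 article --- so there is no internal argument to compare with; judged on its own, your proposal is correct and is essentially the standard proof of Brauer's result, with the clean computation of your second paragraph superseding the hesitant bookkeeping of the first. For the forward direction you need only the definition of the Schur index: since $F(\chi,\eta)/F(\chi)$ is Galois (all character values lie in a cyclotomic extension), $m_{F(\chi)}(\eta)\sum_\sigma\eta^\sigma$ is by definition the character of an $F(\chi)$-representation of $U$, its induction $\theta$ to $G$ is the character of an $F(\chi)$-representation of $G$, and $\ip<\chi,\theta>=m_{F(\chi)}(\eta)\,[F(\chi,\eta):F(\chi)]\,\ip<\eta^G,\chi>$ because each $\sigma$ fixes $\chi$ and commutes with complex conjugation on abelian extensions, so that $\ip<(\eta^G)^\sigma,\chi>=\ip<\eta^G,\chi>$; Theorem~\ref{globalm}, parts (1) and (4), together with the coprimality hypothesis then give that the $q$-part of $m_F(\chi)$ divides that of $m_{F(\chi)}(\eta)$. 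Your reverse direction is right in spirit but the factors should be arranged more carefully than in your sketch: take $L\supseteq F(\chi)$ with $[L:F(\chi)]=m_F(\chi)$ carrying a representation affording $\chi$; its restriction to $U$ is an $L$-representation of $U$ in which $\eta$ occurs with multiplicity $\ip<\eta^G,\chi>$, so Theorem~\ref{globalm}(4) shows $m_L(\eta)$ divides $\ip<\eta^G,\chi>$, hence $q$ does not divide $m_L(\eta)$, and Theorem~\ref{globalm}(3) gives that $m_{F(\chi)}(\eta)$ divides $m_L(\eta)\,[L:F(\chi)]=m_L(\eta)\,m_F(\chi)$; taking $q$-parts closes the loop. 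These are exactly the two applications of Theorem~\ref{globalm} you identified, so once tightened in this way the proposal stands.
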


We now state the fundamental theorem of Brauer and Witt.
\begin{theorem}[Brauer-Witt] \label{BW}
Let $G$ be a finite group with $\chi\in\Irr(G)$, let $F$ be a field
of characteristic zero, and fix a rational prime $q$.
There exists a \qe\ $U\le G$ and
$\eta\in\Irr(U)$ such that $q$ does not divide
$\ip<\eta^G, \chi>[F(\chi,\eta):F(\chi)]$.
If $q$ divides $m_F(\chi)$, then we may take $U$ to be \qe\ at $q$.
\end{theorem}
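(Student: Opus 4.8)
The plan is to follow Brauer's original induction on $|G|$, packaging the combinatorial heart of the argument through Lemma \ref{subgpq} so that the only thing left to verify at each step is a non-divisibility condition on an inner product times a field degree. If $G$ is already \qe, we take $U = G$, $\eta = \chi$, and there is nothing to prove. So assume $G$ is not \qe\ and pick a prime $q$ (the ``special'' prime we wish to control). The first move is to invoke Brauer's characterisation of the Grothendieck group of virtual characters: the trivial character $1_G$, viewed in the relevant character ring, is a $\Z$-linear combination of characters induced from \qe\ subgroups — more precisely, working $q$-locally, $1_G = \sum_i a_i \,\theta_i^G$ with $a_i\in\Z_{(q)}$ and each $\theta_i\in\Irr(U_i)$ for some \qe\ $U_i\le G$. (This is the Brauer induction theorem in the form needed; Witt's contribution refines the subgroups to the \qe\ ones. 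I would cite \cite{Brauer51} and \cite{Witt52} here rather than reprove it.)

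Next I would tensor with $\chi$. Since $\chi = \chi\cdot 1_G = \sum_i a_i\,(\theta_i\cdot\chi|_{U_i})^G = \sum_i a_i\,(\theta_i\cdot \mathrm{Res}_{U_i}\chi)^G$ by the projection formula, and since $\langle\chi,\chi\rangle = 1$, comparing the $\chi$-coefficient on both sides gives
$$1 = \sum_i a_i \,\langle (\theta_i\cdot\mathrm{Res}_{U_i}\chi)^G,\,\chi\rangle = \sum_i a_i\sum_{\eta} \langle\theta_i\cdot\mathrm{Res}_{U_i}\chi,\eta\rangle\,\langle\eta^G,\chi\rangle,$$
the inner sum running over $\eta\in\Irr(U_i)$. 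Because the $a_i$ lie in $\Z_{(q)}$, the left side is a $q$-adic unit, so at least one term on the right must be a $q$-adic unit; hence there is some $i$ and some $\eta\in\Irr(U_i)$ with $q\nmid\langle\eta^{G},\chi\rangle$ and $U_i$ \qe. That already gives a \qe\ $U$ and $\eta$ with $q\nmid\langle\eta^G,\chi\rangle$; the field-degree factor $[F(\chi,\eta):F(\chi)]$ is dealt with separately, because one may always replace $\eta$ by a Galois conjugate over $F(\chi)$ without changing $\langle\eta^G,\chi\rangle$ mod $q$ — or, more carefully, one argues that among a $\mathrm{Gal}(F(\chi,\eta)/F(\chi))$-orbit of such $\eta$ the product of inner product and degree is controlled, using that this Galois group acts on the characters. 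This is where a little care is needed but no deep idea.

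For the final sentence — if $q\mid m_F(\chi)$ then $U$ can be chosen \qe\ \emph{at} $q$ — I would argue as follows. We have found \qe\ $U$ and $\eta$ with $q\nmid\langle\eta^G,\chi\rangle[F(\chi,\eta):F(\chi)]$, so by Lemma \ref{subgpq} the $q$-part of $m_F(\chi)$ equals the $q$-part of $m_{F(\chi)}(\eta)$; since $q\mid m_F(\chi)$, this common $q$-part is $>1$, so $q\mid m_{F(\chi)}(\eta)$, and in particular $m_{F(\chi)}(\eta)\ne 1$. Now $U$ is \qe, say \qe\ at some prime $q'$. By Lemma \ref{elementary}(2), $m_{F(\chi)}(\eta)$ is a power of $q'$; since it is divisible by $q$ and exceeds $1$, we must have $q' = q$. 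Thus $U$ is \qe\ at $q$, as required. The main obstacle, and the only place real work hides, is the bookkeeping in the second paragraph: correctly passing from Brauer's integral relation to a single surviving \qe\ pair while simultaneously keeping the field-degree factor coprime to $q$; everything downstream of that is a short deduction from Lemmas \ref{elementary} and \ref{subgpq}.
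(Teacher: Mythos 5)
Note first that the paper does not reprove the existence statement: it simply cites \cite{Brauer51} and \cite{Witt52}, and the only argument it supplies is that the final sentence follows from Lemma~\ref{elementary}. Your treatment of that final sentence is correct and is exactly the intended deduction: Lemma~\ref{subgpq} identifies the $q$-part of $m_F(\chi)$ with that of $m_{F(\chi)}(\eta)$, so $q\mid m_{F(\chi)}(\eta)$, and Lemma~\ref{elementary}(2) then forces any prime at which $U$ is \qe\ to be $q$ itself.

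For the existence statement, however, your Brauer-induction outline has a genuine gap at precisely the point you dismiss as ``a little care \dots but no deep idea'': arranging $q\nmid[F(\chi,\eta):F(\chi)]$ simultaneously with $q\nmid\ip<\eta^G,\chi>$. Your first proposed fix --- replace $\eta$ by a Galois conjugate over $F(\chi)$ --- achieves nothing, because conjugation over $F(\chi)$ does not change $[F(\chi,\eta):F(\chi)]$ at all (and, once one reduces to $F=F(\chi)$, it does not change $\ip<\eta^G,\chi>$ either); so if every $\eta$ in the orbit fails the degree condition, no conjugate rescues it. The missing step is the content of Witt's refinement, and it is exactly the device the paper deploys in the proof of Lemma~\ref{supergroup}: let $S$ be the Sylow $q$-subgroup of the abelian Galois group of $F(N)/F(\chi)$, partition the irreducible constituents into $S$-orbits, write the relevant multiplicity as a sum over orbits of terms $a_\omega|\omega|\ip<\xi,\chi_V>$ as in equations (\ref{ip1})--(\ref{ip3}) (using that $\chi$ is $S$-invariant so that $\ip<\eta^G,\chi>$ is constant on orbits), and conclude that some orbit has size prime to $q$, hence size $1$, hence consists of an $S$-fixed $\xi$ whose field lies in the fixed field of $S$ and therefore satisfies $q\nmid[F(\chi,\xi):F(\chi)]$. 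Without this orbit argument your sketch produces a \qe\ pair controlling only the inner product, which is not enough to invoke Lemma~\ref{subgpq}; with it, your outline becomes the standard proof, which the paper chooses to leave to the references.
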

This theorem is due to Brauer \cite{Brauer51} and Witt \cite{Witt52}.
The last sentence follows from Lemma~\ref{elementary}.

In his article proving the above result, Brauer (\textit{op. cit.}) noted
that the pair $(U, \eta)$ found when $F = \Q$ is universal for all 
characteristic 0 fields $F$.

The difficulty with using this theory is locating a useful subgroup and
character. Feit \cite{Feit83} expressly avoids using \qe\ subgroups
in his computation of Schur indices except as a last resort.
We will use subgroups at some point in our algorithm, so we try to make 
the best of it. Brauer's proof showed that a suitable subgroup
$U$ could be found amongst the maximal \qe\ subgroups of $G$.
The following lemma extends that idea.
\begin{lemma} \label{supergroup}
Let $U\le G$ be a subgroup and $\eta\in\Irr(U)$ be a character such that
$q$ does not divide $\ip<\eta^G, \chi>[F(\chi,\eta):F(\chi)]$.
Suppose that $V\le G$ is a subgroup such that some conjugate of $U$ is
contained in $V$.
Then there exists $\xi\in\Irr(V)$ such that $q$ does not divide
$\ip<\xi^G, \chi>[F(\chi,\xi):F(\chi)]$.
\end{lemma}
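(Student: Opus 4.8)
The plan is to reduce to the case $U\le V$ and then sort the irreducible constituents of the induced character $\eta^V$ into Galois orbits, picking $\xi$ out of a well-chosen orbit. First I would dispose of the conjugation: by hypothesis some conjugate $U^g$ of $U$ lies in $V$, and replacing $(U,\eta)$ by $(U^g,\eta^g)$, where $\eta^g$ is the corresponding conjugate character of $U^g$, changes nothing relevant --- induction from a conjugate subgroup of a conjugate character yields the same induced character, so $(\eta^g)^G=\eta^G$, and $\eta^g$ takes the same set of values as $\eta$, so $F(\chi,\eta^g)=F(\chi,\eta)$. Hence the hypothesis persists and we may assume outright that $U\le V$.

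Now set $L=F(\chi)$ and work inside the abelian extension $L(N)/L$, where $N$ is the exponent of $G$. Let $\Gamma=\mathrm{Gal}(L(N)/L)$ and let $\Delta=\mathrm{Gal}(L(N)/L(\eta))\le\Gamma$ be the subgroup fixing $\eta$, so that $[\Gamma:\Delta]=[F(\chi,\eta):F(\chi)]$, which is coprime to $q$ by hypothesis. The crucial point is to work with $\Delta$ rather than with all of $\Gamma$: the character $\mu:=\eta^V$ of $V$ is $\Delta$-stable, since $\mu^\delta=(\eta^\delta)^V=\mu$, so it is a non-negative integer combination of $\Delta$-orbit sums on $\Irr(V)$, say $\mu=\sum_{\mathcal O}c_{\mathcal O}\sum_{\theta\in\mathcal O}\theta$. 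Each member of a $\Delta$-orbit $\mathcal O$ has the same value $\ip<\theta^G,\chi>$, because $\Delta$ fixes $\chi$ and the Galois action commutes with induction and preserves inner products; inducing $\mu$ up to $G$ and pairing with $\chi$ therefore gives
\[
\ip<\eta^G,\chi>=\ip<\mu^G,\chi>=\sum_{\mathcal O}c_{\mathcal O}\,|\mathcal O|\,\ip<\theta_{\mathcal O}^G,\chi>,
\]
where $\theta_{\mathcal O}$ is a representative of $\mathcal O$. Since $q$ does not divide the left-hand side, some orbit $\mathcal O$ has $c_{\mathcal O}\,|\mathcal O|\,\ip<\theta_{\mathcal O}^G,\chi>$ coprime to $q$; put $\xi=\theta_{\mathcal O}$. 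Then $q$ does not divide $\ip<\xi^G,\chi>$, and $q$ does not divide $|\mathcal O|$, which equals $[L(\eta,\xi):L(\eta)]$ because the stabilizer of $\xi$ in $\Delta$ is $\mathrm{Gal}(L(N)/L(\eta,\xi))$.

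It remains to combine the degree bounds. Since $[L(\eta):L]=[F(\chi,\eta):F(\chi)]$ and $[L(\eta,\xi):L(\eta)]$ are both coprime to $q$, so is the product $[L(\eta,\xi):L]$; as $[F(\chi,\xi):F(\chi)]=[L(\xi):L]$ divides $[L(\eta,\xi):L]$, it too is coprime to $q$, and together with $q$ not dividing $\ip<\xi^G,\chi>$ this gives the assertion. The one step that genuinely uses the hypothesis --- and the point I would be most careful about --- is this final bookkeeping: without knowing $q$ is coprime to $[F(\chi,\eta):F(\chi)]$ one only controls field degrees over $L(\eta)$, not over $L=F(\chi)$, and the argument breaks. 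Everything else (invariance of inner products and of induction under the Galois action, and the orbit-sum decomposition of the $\Delta$-stable character $\mu$) is routine.
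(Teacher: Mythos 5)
Your proof is correct, and it follows the same overall strategy as the paper's: reduce to $U\le V$ by conjugation (identically to the paper), then decompose $\eta^V$ into Galois orbit sums on $\Irr(V)$, pair with $\chi$, and extract an orbit whose contribution is coprime to $q$. The one genuine difference is the choice of acting Galois group. The paper takes $S$ to be the Sylow $q$-subgroup of $\mathrm{Gal}(F(N)/F)$ with $F=F(\chi)$; since every $S$-orbit has $q$-power length, the orbit selected by the divisibility argument is forced to be a singleton, so $\xi$ is $S$-invariant, lies in the fixed field of $S$, and the bound $q\nmid[F(\xi):F]$ falls out immediately. You instead act by $\Delta=\mathrm{Gal}(L(N)/L(\eta))$, the stabilizer of $\eta$, where the orbit need not collapse to a point; you compensate by identifying the orbit length with $[L(\eta,\xi):L(\eta)]$ via the orbit--stabilizer correspondence and then multiplying degrees through the tower $L\subseteq L(\eta)\subseteq L(\eta,\xi)$, using the hypothesis that $[L(\eta):L]$ is prime to $q$. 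Both routes are sound and of comparable length; the paper's choice buys the structural fact that $\xi$ can be taken Galois-invariant over the $q'$-subfield (which is occasionally useful in its own right), while yours makes the role of the hypothesis on $[F(\chi,\eta):F(\chi)]$ more transparent, as you note at the end. All the individual steps you flag as routine (Galois equivariance of induction and of inner products, constancy of multiplicities on orbits of a stable character) are indeed correct as used.
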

\begin{proof}
We may assume that $F = F(\chi)$.
Take $g\in G$ such that $U^g\le V$.
Define $\eta^g\in\Irr(U^g)$ by  $\eta^g(u^g) = \eta(u)$ for all $u\in U$.
We have $\eta^G = (\eta^g)^G$ and $F(\eta) = F(\eta^g)$, so
we may replace $U$ by $U^g$ and $\eta$ by $\eta^g$ and assume that $U\le V$.

Let $A$ be the Galois group of $F(N)/F$, where $N$ is the exponent
of $G$.
The group $A$ is abelian and has a unique Sylow $q$-subgroup $S$.
Let $L$ be the fixed field of $S$.
Since $q$ does not divide $[F(\eta):F]$, we have $F(\eta) \subseteq L$.
Thus for all $\sigma\in S$ we have $\eta^\sigma = \eta$ and so,
for all $\psi\in\Irr(V)$, $\ip<\eta^V, \psi> = \ip<\eta^V, \psi^\sigma>$.
Let $\Omega$ be the set of all $S$-orbits on $\Irr(V)$ and for each
$\omega\in\Omega$ let $a_\omega = \ip<\eta^V, \psi>$, where $\psi\in\omega$.
We have
\begin{equation} \label{ip1}
\eta^V = \sum_{\omega\in\Omega} a_\omega(\sum\omega).
\end{equation}
We take the inner product of $\eta^G$ with $\chi$.
\begin{equation} \label{ip2}
\ip<\eta^G,\chi> = \ip<\eta^V, \chi_V> =
\sum_{\omega\in\Omega} a_\omega\ip<\sum\omega, \chi_V>.
\end{equation}
Thus we can choose $\omega\in\Omega$
such that $q$ does not divide $a_\omega\ip<\sum\omega, \chi_V>$
and we take $\xi$ to be an element of this $S$-orbit. 
Since $\chi$ is $S$-invariant we have
\begin{equation} \label{ip3}
\ip<\sum\omega, \chi_V> = |\omega|\ip<\xi, \chi_V>,
\end{equation}
so $q$ does not divide $|\omega|$. But $\omega$ is an orbit
of a $q$-group, hence $|\omega|=1$ and $\xi$ is $S$-invariant.
It follows that $F(\xi) \subseteq L$, so $q$ does not divide $[F(\xi):F]$. 
Finally, equation (\ref{ip3}) also shows that $q$ does not divide
$\ip<\xi, \chi_V> = \ip<\xi^G, \chi>$.
\end{proof}

\subsection{Faithful Characters}
A reduction we may make when computing Schur indices is to insist
that the character be faithful. This involves no change to the Schur index,
as the following lemma states.
\begin{lemma}\label{faithful}
Let $\hat G = G/\ker\chi$ and let $\hat\chi$ be the faithful
character of $\hat G$ that lifts to $\chi$. Then for all $F$
we have $m_F(\chi) = m_F(\hat\chi)$.
\end{lemma}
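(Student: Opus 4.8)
The plan is to move representations between $G$ and $\hat G$ along the canonical projection $\pi\colon G\to\hat G$, using the fact that in characteristic zero the kernel of an irreducible character is automatically contained in the kernel of any representation affording a multiple of it.

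First I would record the easy bookkeeping. Since $\chi=\hat\chi\circ\pi$, the two characters take the same values, so $F(\chi)=F(\hat\chi)$, and the Galois conjugates of $\chi$ over $F$ are exactly the class functions $\hat\chi'\circ\pi$ where $\hat\chi'$ runs through the Galois conjugates of $\hat\chi$ over $F$. Writing $\psi$ and $\hat\psi$ for the respective sums of Galois conjugates, this gives $\psi=\hat\psi\circ\pi$; in particular $\psi(1)=\hat\psi(1)$, and since $\pi$ kills $\ker\chi$ the class function $\psi$ is constant on $\ker\chi$, taking the value $\psi(1)$ there. One inequality is now immediate: if $\hat\rho$ is an $F$-representation of $\hat G$ affording $m\hat\psi$, then $\hat\rho\circ\pi$ is an $F$-representation of $G$ affording $m\psi$; taking $m=m_F(\hat\chi)$ gives $m_F(\chi)\le m_F(\hat\chi)$.

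For the reverse inequality, let $\rho$ be an $F$-representation of $G$ affording $m\psi$ with $m=m_F(\chi)$, and put $d=m\psi(1)$, the degree of $\rho$. I claim $\ker\chi\subseteq\ker\rho$: for $g\in\ker\chi$, the matrix $\rho(g)$ has finite order, hence is diagonalisable with all $d$ eigenvalues roots of unity, and their sum is $\Tr\rho(g)=m\psi(g)=d$; a standard absolute-value estimate (in any complex embedding of the cyclotomic field they generate) then forces every eigenvalue to be $1$, so $\rho(g)$ is the identity. Consequently $\rho$ factors as $\rho=\hat\rho\circ\pi$ for an $F$-representation $\hat\rho$ of $\hat G$, which affords $m\hat\psi$, whence $m_F(\hat\chi)\le m_F(\chi)$. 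Combining the two inequalities yields $m_F(\chi)=m_F(\hat\chi)$. The only step carrying any real content is the inclusion $\ker\chi\subseteq\ker\rho$ --- equivalently, that a finite-order matrix over a characteristic-zero field whose trace equals its size must be the identity --- and I expect this elementary eigenvalue estimate to be the sole obstacle; the rest is formal transport along $\pi$.
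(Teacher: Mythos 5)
Your proof is correct and follows the same route as the paper, which disposes of the lemma in a single sentence by identifying representations of $G$ affording multiples of the Galois sum $\psi$ with representations of $\hat G$ affording multiples of $\hat\psi$. The eigenvalue argument you supply for $\ker\chi\subseteq\ker\rho$ is precisely the detail the paper leaves implicit.
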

This follows as a representation of $G$ affording $\chi$ is a representation
of $\hat G$ affording $\hat\chi$.
We will reduce to quotients of subgroups to obtain a faithful character
(as $\hat\chi$) and compute Schur indices of this character.
A number of the parts of Theorem~\ref{globalU} may give sharper results when
applied to $\hat\chi$ rather than $\chi$.

\section{Schur Indices over complete fields}\label{sec:local}
In the chapter on Schur Indices of \cite{Huppert}, the connection between
the Schur index of a character and a certain central simple algebra is set out.
The theory of central simple algebras then gives much information
about the Schur index problem considered here.
There is considerable advantage to be gained by considering Schur indices over
$p$-adic fields. The following result is fundamental to our approach.
It follows from Corollary 18.6 of \cite{Pierce}.
\begin{theorem}\label{local-global}
If $F$ is an algebraic number field, then $m_F(\chi)$ is the least common
multiple of the Schur indices $m_{K}(\chi)$, where $K$ runs over 
the completions of $F(\chi)$ at all places.
\end{theorem}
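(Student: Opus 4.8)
The plan is to realize $m_F(\chi)$ as the index of a central simple algebra and then apply the local--global principle for Brauer groups of number fields, which is exactly what Corollary~18.6 of \cite{Pierce} packages.

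First I would reduce to the case $F = F(\chi)$ by Theorem~\ref{globalm}(1), so that $\chi$ takes values in $F$. Following the central simple algebra dictionary set out in Chapter~38 of \cite{Huppert}, the Wedderburn component of the group algebra $FG$ affording $\chi$ is a central simple $F$-algebra $A$ whose algebra index equals $m_F(\chi)$; write $[A]$ for its class in the Brauer group ${\rm Br}(F)$. Next, for a place $v$ of $F$ let $K = F_v$ be the completion. Since $F(\chi) = F \subseteq K$, the character $\chi$ still takes values in $K$, so $K(\chi) = K$ and the component of $KG$ affording $\chi$ is $A \otimes_F K$; hence ${\rm ind}(A\otimes_F K) = m_K(\chi)$. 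Note also that $A$ is unramified outside a finite set of places, so all but finitely many local indices are $1$ and the least common multiple in the statement is a genuine finite quantity.

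Now I would invoke the Albert--Brauer--Hasse--Noether theorem: the restriction map ${\rm Br}(F) \to \bigoplus_v {\rm Br}(F_v)$ is injective. A short diagram chase then shows that the order (exponent) of $[A]$ in ${\rm Br}(F)$ is the least common multiple of the orders of the local classes $[A\otimes_F F_v]$: each local order divides the global one, and if $m$ is their lcm then $m[A]$ is locally trivial everywhere, hence trivial. Finally I would use the theorem that over a global field, and over each local field, the index of a central simple algebra equals its exponent --- over $\Q_p$ because ${\rm Br}(\Q_p)\cong\Q/\Z$ with index $=$ exponent, over $\R$ because ${\rm Br}(\R)$ has order $2$, over $\C$ trivially, and over $F$ itself by the deep consequences of class field theory. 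Stringing the identities together,
\[
m_F(\chi) = {\rm ind}(A) = {\rm exp}([A]) = {\rm lcm}_v\, {\rm exp}([A\otimes_F F_v]) = {\rm lcm}_v\, {\rm ind}(A\otimes_F F_v) = {\rm lcm}_v\, m_{F_v}(\chi),
\]
which is the claim.

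The substantive input --- the part one cites rather than reproves --- is precisely this bundle of class field theory facts: the ABHN injectivity of the restriction to completions, together with the equality of index and exponent over global and over local fields. Everything else is routine bookkeeping: identifying the simple component of $FG$ with the algebra whose index is the Schur index, and checking that forming the $\chi$-component commutes with the base change $F \to F_v$ (which it does, since the relevant central idempotent of $FG$ already lies in $FG$ and $F(\chi)=F$ is fixed under the completion). I expect the only place needing a little care is the archimedean places and the finiteness of the support of the local indices, but this is immediate from the fact that $A$ is split at all but finitely many $v$.
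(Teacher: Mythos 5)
Your argument is correct and is essentially the paper's proof unpacked: the paper simply cites Corollary~18.6 of \cite{Pierce}, which is exactly the package you assemble --- identify $m_{F(\chi)}(\chi)$ with the index of the simple component of $F(\chi)G$, base-change to each completion, and apply Albert--Brauer--Hasse--Noether injectivity together with the equality of index and exponent over local and global fields. No gaps; the reduction to $F=F(\chi)$ and the compatibility of the central idempotent with completion are handled correctly.
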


We have effective methods to compute the Schur index in the case of
an Archimedean completion. All Schur indices over the complex field are 1.
The following was proved by Frobenius and Schur in 1906 and gives an
effective algorithm for computing Schur indices over the real numbers.
\begin{theorem}[Frobenius-Schur] \label{FS}
For $\chi\in\Irr(G)$, define the indicator $i(\chi)$ by
$$i(\chi) = \frac{1}{|G|}\sum_{x\in G} \chi(x^2).$$
There are exactly three possible values for $i=i(\chi)$:
\begin{enumerate}
\item $i=1$. In this case $\R(\chi) = \R$ and $\m_i(\chi) = 1$.
\item $i=0$. In this case $\R(\chi) = \C$ and $\m_i(\chi) = 1$.
\item $i=-1$. In this case $\R(\chi) = \R$ and $\m_i(\chi) = 2$.
\end{enumerate}
\end{theorem}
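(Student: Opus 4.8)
The plan is to compute $i(\chi)$ from the tensor square of a representation affording $\chi$, read the trichotomy for $i(\chi)$ off Schur's lemma, and then settle the assertions about $\R(\chi)$ and $m_\R(\chi)$ case by case. Fix a $\C G$-module $V$ affording $\chi$, and let $\sigma$ and $\lambda$ be the characters of its symmetric and exterior squares. Evaluating on the eigenvalues of $g$ gives the identity $\sigma(g)-\lambda(g)=\chi(g^2)$, so $i(\chi)=\ip<\sigma,1_G>-\ip<\lambda,1_G>$, which is the dimension of the space of $G$-invariant symmetric bilinear forms on $V$ minus that of the space of $G$-invariant alternating ones. By Schur's lemma the full space of $G$-invariant bilinear forms on $V$, namely $\mathrm{Hom}_G(V,V^*)$, is one-dimensional when $\chi=\bar\chi$ (so that $V\cong V^*$) and zero otherwise; moreover, when it is nonzero, any spanning form $B$ is nondegenerate (the associated map $V\to V^*$ is a nonzero homomorphism of irreducibles), and the symmetric and alternating parts of $B$ are again $G$-invariant, so by one-dimensionality exactly one of them vanishes. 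Hence $i(\chi)\in\{1,0,-1\}$: it is $0$ exactly when $\chi\ne\bar\chi$, it is $1$ exactly when $B$ is symmetric, and it is $-1$ exactly when $B$ is alternating.

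For the remaining claims I would work with $V$ regarded as a real $G$-module $V_\R$, of dimension $2\chi(1)$ over $\R$ and with character $\chi+\bar\chi$. If $\chi\ne\bar\chi$, then $\R(\chi)$ is a subfield of $\C$ strictly containing $\R$, hence $\R(\chi)=\C$; here the relevant character over $\R$ is $\psi=\chi+\bar\chi$, which $V_\R$ affords, so $m_\R(\chi)=1$. This settles $i(\chi)=0$. If $\chi=\bar\chi$, then $\R(\chi)=\R$ and $\psi=\chi$; now $\chi+\bar\chi=2\chi$, so $V_\R$ affords $2\chi$ and $m_\R(\chi)\in\{1,2\}$, and the whole question reduces to whether $\chi$ is afforded by a real representation, equivalently whether $V$ admits a $G$-stable real structure, equivalently whether there is a $\C$-antilinear $G$-equivariant map $J\colon V\to V$ with $J^2=\mathrm{id}_V$.

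To decide this I would build a canonical antilinear intertwiner from the forms already in hand. Let $H$ be a positive-definite $G$-invariant Hermitian form on $V$ (obtained by averaging; unique up to a positive real scalar by Schur), and define $J$ by $B(u,v)=H(u,Jv)$; then $J$ is $\C$-antilinear, $G$-equivariant, and invertible (as $B$ and $H$ are nondegenerate). Since $\chi$ is real-valued, the antilinear $G$-maps $V\to V$ form the one-dimensional space $\mathrm{Hom}_G(\overline{V},V)\cong\mathrm{End}_G(V)$, so $J$ is unique up to a $\C$-scalar $c$, and replacing $J$ by $cJ$ multiplies $J^2$ by $|c|^2$. Write $J^2=\mu\,\mathrm{id}_V$ (Schur): antilinearity gives $J J^2=\bar\mu J$ while $J^2 J=\mu J$, so $\mu$ is real, and $\mu\ne0$ as $J$ is invertible; thus $\mathrm{sign}(\mu)$ is an invariant of $V$. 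Normalizing $J$ to be anti-unitary for $H$ (possible because $J^*J$ is a positive $G$-endomorphism, hence a positive scalar) and rescaling so that $J^2=\pm\mathrm{id}_V$, a short computation from $B(v,u)=H(v,Ju)$ and anti-unitarity identifies $J^2=\mathrm{id}_V$ with $B$ symmetric and $J^2=-\mathrm{id}_V$ with $B$ alternating. If $J^2=\mathrm{id}_V$, the $J$-fixed vectors form a real $G$-submodule realizing $\chi$, so $m_\R(\chi)=1$ and $i(\chi)=1$; if $J^2=-\mathrm{id}_V$, then $V$ has no real structure---any such would be an anti-unitary $G$-map squaring to $\mathrm{id}_V$, hence a $\C$-multiple of $J$, and no such multiple squares to $\mathrm{id}_V$---so $m_\R(\chi)=2$ and $i(\chi)=-1$.

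The main obstacle lies in the last paragraph: converting the symmetry type of the invariant bilinear form into (non)realizability of $\chi$ over $\R$. The delicate points are the normalization of $J$ to be anti-unitary (so that $J^2$ is honestly $\pm\mathrm{id}_V$) and the sign computation matching $J^2=\mathrm{id}_V$ with symmetric $B$ and $J^2=-\mathrm{id}_V$ with alternating $B$; the rest is Schur's lemma together with the elementary character identity for the symmetric and exterior squares.
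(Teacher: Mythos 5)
Your proof is correct, and since the paper offers no proof of its own---it simply cites (4.5), (4.15) and (4.19) of Isaacs---your argument via the symmetric/exterior square identity $\sigma(g)-\lambda(g)=\chi(g^2)$ and the real-versus-quaternionic structure dichotomy ($J^2=\pm\mathrm{id}$) is essentially the standard proof those references point to. The only cosmetic slip is calling a putative real structure ``anti-unitary'' in the final step, but your actual argument there only uses that it is an antilinear $G$-map, hence a scalar multiple of $J$, which is all that is needed.
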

A proof is obtained by combining (4.5), (4.15), and (4.19) of \cite{Isaacs}.

To achieve our program, we will make Schur index calculations
over the $p$-adic fields $\Q_p$.
\begin{theorem} \label{ANF-local}
\begin{enumerate} 
\item (Benard) Let $F$ be an algebraic number field contained in a cyclotomic
field and let $p$ be any finite rational place.
If $P$ and $Q$ are places of $F$ over $p$ then the Schur indices of $\chi$
over the completions $F_P$ and $F_Q$ are equal.
\item If $K$ is a local field with $K = K(\chi)$,
$E/K$ is an extension of degree $k$,
and if $d$ is the greatest common divisor of $k$ and $m_K(\chi)$,
then $m_K(\chi) = dm_E(\chi)$.
\end{enumerate}
\end{theorem}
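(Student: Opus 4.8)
The plan is to reinterpret both statements in terms of the Brauer group. Attached to $\chi$ and a field $F$ of characteristic zero is the simple component $A_\chi$ of $F(\chi)[G]$ afforded by $\chi$; it is a central simple $F(\chi)$-algebra, and by Theorem~\ref{globalm}(1) the Schur index $m_F(\chi)=m_{F(\chi)}(\chi)$ equals the index of $A_\chi$. Over a local field the index of a central simple algebra coincides with its exponent, that is, with the order of its class in the Brauer group, and I shall use the invariant isomorphism $\mathrm{inv}_K\colon\mathrm{Br}(K)\cong\Q/\Z$ for a non-archimedean local field $K$, together with the standard fact that for a finite extension $E/K$ of local fields the restriction map $\mathrm{Br}(K)\to\mathrm{Br}(E)$ becomes multiplication by $[E:K]$ under these identifications. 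The archimedean cases are governed by the Frobenius--Schur indicator, Theorem~\ref{FS}.

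For part (2): since $K=K(\chi)$ and $K\subseteq E$ we have $E(\chi)=E$, so $m_E(\chi)$ is the index, hence the exponent, of $A_\chi\otimes_K E$. Write $\mathrm{inv}_K[A_\chi]=a/m$ in lowest terms, so that $m=m_K(\chi)$. Then $\mathrm{inv}_E[A_\chi\otimes_K E]=ka/m$ in $\Q/\Z$, and since $\gcd(a,m)=1$ its order is $m/\gcd(k,m)$. By definition $d=\gcd(k,m_K(\chi))=\gcd(k,m)$, so $m_E(\chi)=m/d$, i.e.\ $m_K(\chi)=d\,m_E(\chi)$. The archimedean possibilities ($K=\R$ with $E\in\{\R,\C\}$, and $K=\C$) are checked directly from Theorem~\ref{FS} and give the same conclusion.

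For part (1): because $F$ lies in a cyclotomic field, so does $F(\chi)\subseteq F(N)$, whence $F(\chi)/\Q$ is abelian. By Theorem~\ref{globalm}(1) the Schur index of $\chi$ over a completion $F_P$ equals its Schur index over any completion of $F(\chi)$ lying above $P$, so it is enough to prove that the local Schur index $m_{\mathcal P}:=m_{F(\chi)_{\mathcal P}}(\chi)$, the index (equivalently the exponent) of $A_\chi\otimes_{F(\chi)}F(\chi)_{\mathcal P}$, is independent of the place $\mathcal P$ of $F(\chi)$ above a fixed rational prime $p$; this is precisely what permits the algorithm, via Theorem~\ref{local-global}, to use one $p$-adic completion per rational prime. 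Now $\mathrm{Gal}(F(\chi)/\Q)$ acts transitively on the places above $p$, so it suffices to show that each $\sigma$ in this group preserves every $m_{\mathcal P}$. For this I would invoke two facts about Schur algebras over abelian fields: first, that $F(\chi)$ contains a primitive $m(\chi)$-th root of unity, which is item~4 of Theorem~\ref{globalU} applied with $\Q(\chi)\subseteq F(\chi)$; and second, the Benard--Schacher transformation law, which says that if $\sigma$ acts on this root of unity by $\zeta\mapsto\zeta^{t}$ then $\mathrm{inv}_{\sigma(\mathcal P)}[A_\chi]=t\cdot\mathrm{inv}_{\mathcal P}[A_\chi]$. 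Since each $m_{\mathcal P}$ divides $m(\chi)$ (Theorem~\ref{globalm}(2) together with the equality of index and exponent over number fields) and $\gcd(t,m(\chi))=1$, multiplication by $t$ on $\frac{1}{m(\chi)}\Z/\Z$ preserves orders; transitivity of the Galois action then forces all the $m_{\mathcal P}$ to be equal.

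The heart of the matter, and the only step I would not carry out in detail, is the Benard--Schacher transformation law used in part (1). Its proof rests on the Brauer--Witt theorem (Theorem~\ref{BW}) to replace $A_\chi$ by a Brauer-equivalent cyclotomic algebra over $F(\chi)$, followed by an explicit computation of the local invariants of such an algebra through local class field theory, from which both the root-of-unity condition and the transformation law fall out. I would cite Benard's work on the Schur subgroup of the Brauer group for this rather than reprove it. Part~(2), by contrast, is nothing more than the arithmetic of $\mathrm{Br}$ of a local field.
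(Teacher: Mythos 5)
Your proposal is correct. The paper offers no argument of its own here: it simply cites Benard's Theorem~1 for part~(1) and Proposition~17.10 of Pierce for part~(2). What you have done is unwind those citations into the standard Brauer-group arguments. For part~(2) your computation is a complete and correct proof of exactly what Pierce's proposition asserts in this case: identify $m_K(\chi)$ with the order of $\mathrm{inv}_K[A_\chi]=a/m$ in $\Q/\Z$ (using index $=$ exponent over a local field), note that restriction to $E$ multiplies the invariant by $k$, and observe that the order of $ka/m$ with $\gcd(a,m)=1$ is $m/\gcd(k,m)$; the archimedean check via Theorem~\ref{FS} is also right. For part~(1) you correctly reduce Benard's theorem to the Benard--Schacher uniform distribution law $\mathrm{inv}_{\sigma(\mathcal P)}[A_\chi]=t\cdot\mathrm{inv}_{\mathcal P}[A_\chi]$ together with transitivity of $\mathrm{Gal}(F(\chi)/\Q)$ on the places over $p$, and you are candid that the transformation law itself is taken from the literature --- which is the same dependence the paper has, just pushed one level deeper (indeed, the uniform distribution statement is essentially the content of Benard's cited theorem). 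The net effect is that your write-up buys a self-contained proof of part~(2) and a transparent account of exactly which nontrivial input part~(1) rests on, at the cost of importing the machinery of local invariants that the paper deliberately keeps offstage.
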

The first result is Theorem~1 of \cite{Benard72}. 
The second follows from Proposition~17.10 of \cite{Pierce}.

In view of this theorem we concentrate on computing $m_p(\chi)$, as these
values, together with the degrees of the extensions
\hbox{$[E:\Q_p(\chi)]$},
suffice to compute Schur indices over all algebraic number fields.

\begin{theorem} \label{localU1}
Let $G$ be a finite group with $\chi\in\Irr(G)$.
\begin{enumerate}
\item If $p$ is an odd prime, then $m_p(\chi)$ divides $p-1$.
\item $m_2(\chi)$ divides $2$, and $m_K(\chi) = 1$ when $K = \Q_2(4)$.
\item If $G$ has abelian Sylow 2-subgroups, then $m_2(\chi) = 1$.
\end{enumerate}
\end{theorem}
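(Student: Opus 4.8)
The plan is to treat all three parts by the Brauer-Witt reduction followed by an analysis of the resulting $p$-adic cyclotomic algebras. Fix a prime $q$ dividing whichever local Schur index is in question. By Theorem~\ref{BW} applied over $F=\Q_p$, together with Lemma~\ref{subgpq}, we obtain a subgroup $U\le G$ that is \qe\ at $q$ and a character $\eta\in\Irr(U)$ for which the $q$-part of $m_p(\chi)$ equals the $q$-part of $m_{\Q_p(\chi)}(\eta)$; by Lemma~\ref{elementary} this is a power of $q$. So it suffices to bound $p$-adic Schur indices of characters of \qe\ groups. For parts~1 and~2 the essential input is the local refinement of Theorem~\ref{globalU}(8) due to Schmid \cite{Schmid85}: if a prime power $q^{a}>2$ divides a $p$-adic Schur index, then $q^{a}$ divides $p-1$.

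Granting this, parts~1 and~2 are bookkeeping. If $p$ is odd and $p\mid m_p(\chi)$, then some $p^{a}>2$ divides $m_p(\chi)$, forcing $p^{a}\mid p-1$, which is absurd; hence $p$ is prime to $m_p(\chi)$. Every prime-power divisor $q^{a}>2$ of $m_p(\chi)$ divides $p-1$, and a lone factor $2$ divides $p-1$ automatically since $p$ is odd, so $m_p(\chi)\mid p-1$. When $p=2$, an odd prime-power divisor $q^{a}\ge 3$ of $m_2(\chi)$ would divide $2-1=1$, as would a factor $4$, so $m_2(\chi)\mid 2$. For the last assertion of part~2, take $K=\Q_2(4)$; by Theorem~\ref{globalm}(1) we may assume $F=\Q_2(\chi)$, and $m_2(\chi)\in\{1,2\}$ by what was just shown. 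If $\zeta_4\notin\Q_2(\chi)$, then $[K(\chi):\Q_2(\chi)]=2$ and Theorem~\ref{ANF-local}(2), applied to $K(\chi)/\Q_2(\chi)$, gives $m_{K(\chi)}(\chi)=m_2(\chi)/\gcd(2,m_2(\chi))=1$; if $\zeta_4\in\Q_2(\chi)$, then $K(\chi)=\Q_2(\chi)$ and one uses instead the \qe\ analogue of the $\fru$-clause of Theorem~\ref{globalU}(6), namely that a $2$-adic field containing $\fru$ supports no Schur index divisible by $2$. Either way $m_K(\chi)=1$.

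For part~3, suppose $G$ has abelian Sylow $2$-subgroups yet $m_2(\chi)=2$. By Theorem~\ref{BW} at $q=2$ we obtain $U\le G$ that is \qe\ at $2$ and $\eta\in\Irr(U)$ with $m_{\Q_2(\chi)}(\eta)=2$. Let $A$ be the cyclic normal $2$-complement of $U$ and $P\cong U/A$ a Sylow $2$-subgroup; then $P$ embeds in a Sylow $2$-subgroup of $G$, hence is abelian, and since $|A|$ is odd we have $H^{2}(P,A)=0$, so the extension splits and $U=AP$ with $A\cap P=1$. Choose $\lambda\in\Irr(A)$ under $\eta$ and put $T=I_U(\lambda)=AP_0$ with $P_0\le P$; as $P$ is abelian, $T$ is normal in $U$ with abelian quotient $U/T$, and (since $P_0$ is abelian) $\eta=\psi^{U}$ for some linear $\psi\in\Irr(T)$ with $\psi_A=\lambda$. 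Write $\tau=\psi_{P_0}$. Choosing coset representatives of $T$ in $U$ inside $P$, the simple component of $\Q_2(\chi)(\eta)[U]$ affording $\eta$ is a cyclotomic crossed-product algebra with maximal subfield $F(\psi)$ over $F(\eta)$ whose factor set takes values in $\langle\tau\rangle$, hence in roots of unity of $2$-power order. The decisive observation is that, $P$ being abelian, $U/T$ centralises $P_0$ and so fixes $\tau$; therefore $\Q(\zeta_{|\tau|})\subseteq F(\eta)$, and $F(\psi)=F(\eta)(\zeta_{|\lambda|})$ with $|\lambda|$ odd. Consequently $F(\psi)/F(\eta)$ is unramified above $2$ and the factor set is unit-valued, so the algebra is trivial in the Brauer group of $\Q_2(\chi)(\eta)$, contradicting $m_{\Q_2(\chi)}(\eta)=2$. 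Hence $m_2(\chi)=1$.

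The main obstacle is not these arguments but the one external fact they rest on: the local refinement of Theorem~\ref{globalU}(8). Unwinding it amounts to pinning down the part of the Schur subgroup of $\Q_p$ that can carry a Schur index divisible by a given $q$: one must show that wild ramification at $p$ never contributes to such an index, while the tame ramified cyclotomic extension of $\Q_p$ has degree $p-1$. If a self-contained treatment is wanted, that is where the genuine work lies, rather than in the Brauer-Witt bookkeeping.
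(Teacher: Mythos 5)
First, for calibration: the paper does not prove this theorem at all. It attributes parts~1 and~2 to Witt (Satz~10 and~11 of \cite{Witt52}) and to Yamada \cite{Yamada74}, and part~3 to \cite{Yamada78}. So any honest comparison is between your argument and the literature the paper points to, not a proof in the text.

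For parts~1 and~2 there is a genuine gap, and you have in effect named it yourself in your last paragraph. The ``local refinement of Theorem~\ref{globalU}(8)'' that everything rests on --- if $q^a>2$ divides $m_p(\chi)$ then $q^a\mid p-1$ --- is not a refinement of Schmid's global statement; it \emph{is} (up to trivial bookkeeping) the assertion that the Schur subgroup of $\Q_p$ has exponent dividing $p-1$ (resp.\ $2$ when $p=2$), which is exactly what parts~1 and~2 say. The logical dependency in the literature runs the other way: the global Theorem~\ref{globalU}(8) is deduced from these local facts via Theorem~\ref{local-global}, so invoking a local form of it here is close to assuming the conclusion. The same problem recurs in the subcase $\zeta_4\in\Q_2(\chi)$ of part~2, where the ``\qe\ analogue of the $\fru$-clause of Roquette'' you appeal to is literally the second clause of part~2 in slightly greater generality (Roquette's theorem as stated in Theorem~\ref{globalU}(6) is about $q$-groups and does not cover \qe\ groups). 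The real content --- that wild ramification contributes nothing, i.e.\ $p\nmid m_p(\chi)$ for odd $p$ and $4\nmid m_2(\chi)$, via the structure of cyclotomic algebras over $\Q_p$ --- is precisely Witt's Satz~10/11, and it is absent. By contrast, the surrounding reductions (Brauer--Witt, the $\gcd$ computation from Theorem~\ref{ANF-local}(2) for $K=\Q_2(4)$ when $\zeta_4\notin\Q_2(\chi)$) are fine.

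Part~3 is different: there you give an actual argument, essentially Yamada's, and it is sound in outline. The splitting $U=A\rtimes P$, the normality of $T=I_U(\lambda)$, the linearity of $\psi$, and the key observation that an abelian $P$ forces the $2$-power part $\tau$ of the factor set data into $F(\eta)$, so that $F(\psi)/F(\eta)$ is unramified and the unit-valued factor set is killed by $H^2(\mathrm{Gal},U_{F(\psi)})=0$ --- all of this is correct. The one step you gloss over is the identification of the simple component of $F(\eta)[U]$ with the crossed product $(F(\psi)/F(\eta),\beta)$: this needs the natural map $U/T\to\mathrm{Gal}(F(\psi)/F(\eta))$ to be an isomorphism, which is not automatic in general but does hold here (injectivity from irreducibility of $\psi^U$, surjectivity because the Galois conjugates of $\psi$ over $F(\eta)$ are among the constituents of $\eta_T=\sum_{g}\psi^g$). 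With that filled in, part~3 is a legitimate self-contained proof where the paper offers only a citation.
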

Parts 1 and 2 are results that have been derived a number of times,
perhaps first by Witt \cite{Witt52}, Satz 10 and Satz 11.
Yamada \cite{Yamada74} proves these results and much more.

The third part is found in \cite{Yamada78}. This paper also gives a
construction, giving, for any odd prime $p$ and any $d$ dividing $p-1$,
a character $\chi$ with $m_p(\chi) = d$.
The group of this construction has cyclic Sylow subgroups,
so the third part does not extend to odd primes%
\footnote{There is an error in the defining relations of the group of the
construction given in \cite{Yamada78}. The following group works:
$\langle x,y \,\vert\, x^p=1, y^{d(p-1)}=1, y^{-1}xy = x^r \rangle$,
where $r$ has order $p-1$ modulo $p$.}.

Proposition 18.5 of \cite{Pierce} implies that the set of places giving
Schur index $>1$ is finite, but it is helpful to have an explicit list
containing these places. For this, Brauer character theory is useful.
For fixed finite $p$ we will consider the $p$-modular Brauer characters
of $G$. Let $B$ be a $p$-block of $G$. The set of ordinary
irreducible characters in $B$ will be denoted $\Irr(B)$ and the set of
irreducible Brauer characters in $B$ will be denoted $\IBr(B)$.
The restriction of an ordinary character $\psi$ to the $p$-regular elements
of $G$ will be written $\psi^*$.
\begin{theorem} \label{localU2} We fix a finite rational prime $p$ and
let $B$ be the $p$-block with $\chi\in\Irr(B)$.
\begin{enumerate}
\item Let $\phi\in\IBr(G)$ occur with multiplicity $k$ as a
constituent of $\chi^*$. Then
$m_p(\chi)$ divides $k\,[\Q_p(\chi,\phi):\Q_p(\chi)]$.
\item If $p$ does not divide $|G|/\chi(1)$, then $m_p(\chi) = 1$
\item (Feit) Suppose that $x$ is a $p$-regular element of
$G$ and that $\psi(x)\in\Q_p(\chi)$ for all $\psi\in\Irr(B)$.
Then $m_p(\chi)$ divides $\chi(x)$ in the ring of algebraic integers.
\end{enumerate}
\end{theorem}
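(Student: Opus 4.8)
The plan is to treat the three parts in turn and to derive (2) and (3) from (1).

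For part (1) I would set $K=\Q_p(\chi)$, so that $m_p(\chi)=m_K(\chi)$ by Theorem~\ref{globalm}(1), and write $d_{\psi\phi}$ for the multiplicity of $\phi$ in $\psi^*$ (so $k=d_{\chi\phi}$). Standard modular representation theory gives an absolutely irreducible representation of $G$ in characteristic $p$ affording $\phi$ that is realizable over the residue field of $\Q_p(\phi)$; pick such a module $M_0$ over that residue field. Its projective cover $P_0$ is projective over this finite group algebra, so by lifting of idempotents over the complete discrete valuation ring $\mathcal{O}_{\Q_p(\phi)}$ it lifts to a projective $\mathcal{O}_{\Q_p(\phi)}G$-lattice $\widetilde{P}_0$ whose character over $\Q_p(\phi)$ is the projective indecomposable character $\Phi_\phi=\sum_{\psi\in\Irr(B)}d_{\psi\phi}\,\psi$ (Brauer reciprocity). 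Extending scalars to $\Q_p(\chi,\phi)$, which contains $\Q_p(\chi)$, I obtain a representation of $G$ over $\Q_p(\chi,\phi)$ affording $\Phi_\phi$, so Theorem~\ref{globalm}(4) gives $m_{\Q_p(\chi,\phi)}(\chi)$ dividing $\langle\chi,\Phi_\phi\rangle=d_{\chi\phi}=k$, and then Theorem~\ref{globalm}(3), applied to the finite extension $\Q_p(\chi,\phi)/\Q_p(\chi)$, yields
\[
m_p(\chi)=m_K(\chi)\ \mid\ m_{\Q_p(\chi,\phi)}(\chi)\,[\Q_p(\chi,\phi):\Q_p(\chi)]\ \mid\ k\,[\Q_p(\chi,\phi):\Q_p(\chi)].
\]

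Part (2) is then the special case of (1) in which $\chi$ lies in a block $B$ of defect zero: if $p$ does not divide $|G|/\chi(1)$ then $\chi(1)_p=|G|_p$, the decomposition matrix of $B$ is the $1\times1$ matrix $(1)$, so $\chi^*=\phi$ is irreducible with $k=1$, and $\Q_p(\phi)=\Q_p(\chi^*)\subseteq\Q_p(\chi)$ forces $[\Q_p(\chi,\phi):\Q_p(\chi)]=1$; part (1) gives $m_p(\chi)\mid1$.

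For part (3) I would combine part (1) with the Galois action on $\IBr(B)$. First I would observe that the hypothesis passes from ordinary to Brauer characters: since $x$ is $p$-regular, $\psi(x)=\sum_{\phi\in\IBr(B)}d_{\psi\phi}\,\phi(x)$ for $\psi\in\Irr(B)$, and because the decomposition matrix $D_B$ has full column rank it admits a left inverse with rational entries, so $\psi(x)\in\Q_p(\chi)$ for all $\psi\in\Irr(B)$ forces $\phi(x)\in\Q_p(\chi)$ for all $\phi\in\IBr(B)$. Next let $\Gamma=\mathrm{Gal}(\Q_p(N)/\Q_p(\chi))$, where $N=\exp G$, act on $\IBr(B)$; for $\sigma\in\Gamma$ we have $\chi^\sigma=\chi$, hence $d_{\chi\,\phi^\sigma}=d_{\chi\phi}$, while $\phi^\sigma(x)=\sigma(\phi(x))=\phi(x)$ because $\phi(x)\in\Q_p(\chi)$. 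Thus $\phi\mapsto d_{\chi\phi}\,\phi(x)$ is constant on each $\Gamma$-orbit $\omega$, and $|\omega|=[\Q_p(\chi,\phi):\Q_p(\chi)]$ for $\phi\in\omega$ by the orbit--stabiliser formula. Grouping $\chi(x)=\chi^*(x)=\sum_{\phi\in\IBr(B)}d_{\chi\phi}\,\phi(x)$ by $\Gamma$-orbits gives
\[
\chi(x)=\sum_{\omega}[\Q_p(\chi,\phi_\omega):\Q_p(\chi)]\;d_{\chi\phi_\omega}\;\phi_\omega(x),
\]
with $\phi_\omega$ a chosen representative of $\omega$. Each $\phi_\omega(x)$ is an algebraic integer and, by part (1), $m_p(\chi)$ divides the integer $[\Q_p(\chi,\phi_\omega):\Q_p(\chi)]\,d_{\chi\phi_\omega}$, so every term --- and hence $\chi(x)$ --- is divisible by $m_p(\chi)$ in the ring of algebraic integers.

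The step I expect to need the most care is the realizability claim opening part (1): that the simple modular module affording $\phi$, and in particular its projective cover, descend exactly to the residue field of $\Q_p(\phi)$, so that the lifted projective is available already over $\Q_p(\chi,\phi)$ rather than over some larger cyclotomic field; a coarser field of definition would give only a weaker divisibility and, in particular, would break the sharp bound needed for the orbit count in part (3). The remaining ingredients --- idempotent lifting, Brauer reciprocity for projective indecomposable characters, full column rank of the decomposition matrix, and the Galois permutation action on $\IBr(B)$ --- are standard, after which the arithmetic is just repeated use of Theorem~\ref{globalm}.
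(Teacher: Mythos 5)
Your proof is correct, and it is essentially the argument the paper delegates to its references: the paper itself only remarks that (2) follows from (1) via $\phi=\chi^*$ (citing Navarro) and otherwise cites Feit and Benard, whose proofs are exactly your lifted-projective-cover argument for (1) and the Galois-orbit decomposition of $\chi^*$ for (3). The one step you rightly flag --- that the simple module affording $\phi$, hence its projective cover, is defined over the residue field of $\Q_p(\phi)$ --- is sound: Schur indices over finite fields are trivial by Wedderburn, so the module descends to its trace field, and since $\Q_p(\phi)/\Q_p$ is unramified that trace field is precisely the residue field of $\Q_p(\phi)$, giving the sharp degree $[\Q_p(\chi,\phi):\Q_p(\chi)]$ needed both in (1) and for the orbit count in (3).
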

These results are stated in \cite{Feit83}.
The second part follows from the first as we may take $\phi = \chi^*$
by Theorem~3.18 of \cite{Navarro}.
Feit proves the third part from the first in \cite{Feit83},
Theorem~3.1 and Corollary 3.2.
Feit attributes the first part to R.~Brauer, and refers to his own text
\cite{Feit82}, Theorem~IV.9.3, for a proof.
On the other hand, Benard, in \cite{Benard76},
attributes it to ``K.~Kronstein in some unpublished work'',
and gives a proof (\textit{op. cit.} Theorem~2.6).

We see from the second part that the $p$-adic Schur indices we need to
consider are limited to those primes $p$ dividing the order of $G$.

Stronger results have been proved in the case where the block
containing $\chi$ has cyclic defect groups.
The following is proved in \cite{Benard76}, Theorems~8.1 and 8.15,
also in \cite{Feit82}, Chapter VII, \S13.
Remark (III.13) of \cite{Plesken} gives a generalisation.
\begin{theorem}[Benard's Formula] \label{BenardFeit}
Let $p$, $B$, and $\chi$ be as in Theorem~\ref{localU2} and further suppose
that $B$ has cyclic defect groups.
\begin{enumerate}
\item The fields $\Q_p(\phi)$, for $\phi\in\IBr(B)$, are equal.
Call this field $K$. 
\item $m_p(\chi) = [K(\chi):\Q_p(\chi)]$.
\end{enumerate}
\end{theorem}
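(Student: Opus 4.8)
The plan is to lean on the structure theory of blocks with cyclic defect group (Dade's theorem and the Brauer tree), to prove Part~2 by establishing a divisibility in each direction, and to obtain Part~1 as a feature of that structure theory. Two facts will be used throughout. First, every irreducible Brauer character takes values in $\Q_p(\zeta_m)$ for some $m$ prime to $p$, since $p$-regular elements have order prime to $p$; hence each $\Q_p(\phi)$, $\phi\in\IBr(B)$, is unramified over $\Q_p$. Second, in a block with cyclic defect group the decomposition numbers all lie in $\{0,1\}$, the members of $\IBr(B)$ index the edges of the Brauer tree, and (decomposition numbers being Galois invariant) $\mathcal{G}:=\mathrm{Gal}(\Q_p(N)/\Q_p)$, with $N$ the exponent of $G$, permutes $\Irr(B)$ and $\IBr(B)$ and so acts on the Brauer tree by graph automorphisms fixing the exceptional vertex.

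For Part~1 one must show that $\Q_p(\phi)$ --- the fixed field of the $\mathcal{G}$-stabiliser of the edge $\phi$ --- is independent of $\phi\in\IBr(B)$. I would deduce this from Dade's explicit model for $B$ (reached, if one prefers, by first applying Brauer's first main theorem to pass to a block of $N_G(D)$ with cyclic normal defect group $D$ and $N_G(D)/C_G(D)$ cyclic): that model, together with a combinatorial argument with the Galois action on the planar Brauer tree, shows that the edge-stabilisers in $\mathcal{G}$ all coincide, so the fields $\Q_p(\phi)$ are equal; write $K$ for the common value. (Equivalently one can check that $\Q_p(\phi)=\Q_p(\omega_B)$ for every $\phi$, where $\omega_B$ is the central character of $B$, which is visibly independent of $\phi$.) This is where one appeals to the full force of the cyclic-defect structure theory, as the cited works do, rather than reproving it.

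The upper bound in Part~2 is then immediate. The identity is $p$-regular, so $\chi^*\ne 0$ and some $\phi\in\IBr(B)$ occurs in $\chi^*$; its multiplicity is $1$ since the decomposition numbers of $B$ are $0$ or $1$, and $\Q_p(\chi,\phi)=\Q_p(\chi)\cdot\Q_p(\phi)=\Q_p(\chi)\cdot K=K(\chi)$ by Part~1. Theorem~\ref{localU2}(1) now gives that $m_p(\chi)$ divides $[K(\chi):\Q_p(\chi)]$.

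The lower bound, that $[K(\chi):\Q_p(\chi)]$ divides $m_p(\chi)$, is the main obstacle, since none of the estimates quoted earlier bounds a Schur index from below. As $K/\Q_p$ is unramified, so is $K(\chi)/\Q_p(\chi)$; let $A_\chi$ be the simple component of $\Q_p G$ corresponding to the Galois orbit of $\chi$, a central simple $\Q_p(\chi)$-algebra of Schur index $m_p(\chi)$. By the theory of central simple algebras over local fields, a subfield of $K(\chi)$ containing $\Q_p(\chi)$ affords $\chi$ exactly when its degree over $\Q_p(\chi)$ is a multiple of $m_p(\chi)$; so, given the upper bound, what remains is to show that the Hasse invariant of $A_\chi$ has order exactly $[K(\chi):\Q_p(\chi)]$. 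For this I would once more use Dade's structure theorem --- in the form in which Plesken presents $B$ as a $\Z_p$-order --- to identify $A_\chi$ with a full matrix ring over a cyclic algebra $(K(\chi)/\Q_p(\chi),\sigma,a)$, $\sigma$ a generator of the (cyclic) Galois group, in which the parameter $a$ is, modulo norms, a power of $p$ dictated by $|D|$ and the combinatorics of the Brauer tree, with valuation in $\Q_p(\chi)$ prime to $[K(\chi):\Q_p(\chi)]$; then $\mathrm{inv}(A_\chi)$ has the required order and $m_p(\chi)=[K(\chi):\Q_p(\chi)]$ follows. The delicate step --- carried out in detail by Benard, Feit and Plesken --- is precisely this identification of $A_\chi$ as a cyclic algebra with the correct invariant-carrying parameter; once that is in hand, the conclusion is formal.
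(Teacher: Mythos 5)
The paper offers no proof of this theorem at all: it is quoted from Benard (\cite{Benard76}, Theorems~8.1 and 8.15) and Feit (\cite{Feit82}, Ch.~VII, \S13), with Plesken cited for a generalisation, so there is no in-paper argument to compare yours against. Your sketch is a faithful map of where such a proof lives. The genuinely complete part is the upper bound in Part~2: granting Part~1, some $\phi\in\IBr(B)$ occurs in $\chi^*$ with multiplicity exactly $1$ (decomposition numbers of a cyclic block are $0$ or $1$), and Theorem~\ref{localU2}(1) gives $m_p(\chi)\mid[K(\chi):\Q_p(\chi)]$; that step is correct and is essentially how these sources obtain the divisibility.

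The two load-bearing claims, however --- that the Galois stabilisers of the edges of the Brauer tree all coincide (Part~1), and that the simple component $A_\chi$ is a cyclic algebra whose Hasse invariant has order exactly $[K(\chi):\Q_p(\chi)]$ (the lower bound in Part~2) --- are precisely the content of the theorem, and your proposal defers both to Dade, Benard and Plesken rather than proving them. In particular, the suggestion that Part~1 follows from ``a combinatorial argument with the Galois action on the planar Brauer tree'' is too optimistic as stated: a group acting on a tree by automorphisms fixing the exceptional vertex does not by itself force all edge stabilisers to be equal (a star already defeats this), so the cyclic-defect structure theory is doing real work there and cannot be waved at. The parenthetical identification $\Q_p(\phi)=\Q_p(\omega_B)$ is likewise asserted without justification. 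In short, your proposal is an accurate and well-organised annotated citation --- which is exactly what the paper itself provides --- but not a self-contained proof; the gaps sit exactly at the two steps you flag as ``where one appeals to the structure theory.''
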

Generally it can be difficult to find irreducible Brauer characters,
but to apply this theorem we have no need to.
The field $K$ in the above can be obtained from the ordinary character table.
We have
$$K = \Q_p(\phi:\phi\in\IBr(B)) = \Q_p(\psi^*:\psi\in\Irr(B)),$$
the first equality by the first part of the theorem, the second equality as
the two sets of characters are integer linear combinations of one-another.
We will often apply Theorem~\ref{BenardFeit} when $G$ is \qe, hence soluble,
in which case $\IBr(G)$ is easily obtained from $\Irr(G)$.

We give another result on local Schur indices that is useful
for computations. The following is stated in \cite{Feit83} (2.15),
and follows from, for instance, Proposition 18.7a of \cite{Pierce}
(Hasse's Sum Theorem).
\begin{theorem}\label{counting}
Let $V$ be the set of all places of $\Q(\chi)$.
For $v\in V$ let $m_v(\chi)$ denote the Schur index of $\chi$ over the
completion of $\Q(\chi)$ at $v$.
For prime $q$, suppose that $q^e>1$ is the $q$-part of $m(\chi)$, and let
$V_q =\{v\in V\;\vert\;q^e\;\mbox{divides}\; m_v(\chi)\}$.
Then $|V_q| > 1$, and, if $q^e=2$, then $|V_q|$ is even.
\end{theorem}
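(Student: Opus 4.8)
The plan is to reformulate the statement inside the Brauer group of the number field $k=\Q(\chi)$ and then invoke Hasse's reciprocity law. As recalled in Section~\ref{sec:intro}, and as underlies Theorem~\ref{local-global}, the character $\chi$ determines a class $\alpha\in\mathrm{Br}(k)$ whose order is $m(\chi)$; moreover, for every place $v$ of $k$ the local invariant $\mathrm{inv}_v(\alpha)\in\Q/\Z$ has order exactly the local Schur index $m_v(\chi)$, and only finitely many of these invariants are nonzero (Proposition~18.5 of \cite{Pierce}). Hasse's Sum Theorem (Proposition~18.7a of \cite{Pierce}) supplies the relation
$$\sum_{v}\mathrm{inv}_v(\alpha)=0 \quad\mbox{in } \Q/\Z,$$
the sum being over all places of $k$.

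First I would isolate the $q$-primary part. Write $m(\chi)=q^e m'$ with $(q,m')=1$ and set $\beta=m'\alpha\in\mathrm{Br}(k)$. Since $\alpha$ has order $q^e m'$, the class $\beta$ has order $q^e$. For each $v$, the invariant $\mathrm{inv}_v(\beta)=m'\,\mathrm{inv}_v(\alpha)$ has order $m_v(\chi)/(m_v(\chi),m')$; since $m_v(\chi)$ divides $m(\chi)=q^e m'$, its prime-to-$q$ part divides $m'$, and this order is therefore exactly the $q$-part of $m_v(\chi)$. Consequently $v\in V_q$ if and only if $\mathrm{inv}_v(\beta)$ has order exactly $q^e$, and all the $\mathrm{inv}_v(\beta)$ lie in $\frac{1}{q^e}\Z/\Z$. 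Because $m(\chi)$ is the least common multiple of the $m_v(\chi)$ (Theorem~\ref{local-global}), the $q$-part of some $m_v(\chi)$ equals $q^e$, whence $V_q\ne\emptyset$.

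Next I would rule out $|V_q|=1$ by feeding $\beta$ into Hasse's theorem, $\sum_v\mathrm{inv}_v(\beta)=0$. If $V_q=\{v_0\}$, then $\mathrm{inv}_{v_0}(\beta)$ has order $q^e$ while, for every other $v$, $\mathrm{inv}_v(\beta)$ has order dividing $q^{e-1}$ and hence lies in the subgroup $\frac{1}{q^{e-1}}\Z/\Z$. The sum relation then forces $\mathrm{inv}_{v_0}(\beta)=-\sum_{v\ne v_0}\mathrm{inv}_v(\beta)\in\frac{1}{q^{e-1}}\Z/\Z$, contradicting that its order equals $q^e$. Hence $|V_q|\ge 2$, which is the first claim.

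For the parity assertion, take $q^e=2$, so $\beta$ has order $2$ and each $\mathrm{inv}_v(\beta)\in\{0,\frac12\}$, with $v\in V_q$ precisely when $\mathrm{inv}_v(\beta)=\frac12$. Hasse's relation then reads $\frac{|V_q|}{2}=0$ in $\Q/\Z$, so $|V_q|$ is even. The argument leans entirely on two imported facts — that $\chi$ corresponds to a Brauer class over $\Q(\chi)$ whose local orders are the $m_v(\chi)$, and Hasse reciprocity — so I anticipate no real obstacle; the one point that needs care is the passage from $\alpha$ to $\beta$, where one must check that $\mathrm{inv}_v(\beta)$ has order exactly the $q$-part of $m_v(\chi)$, and not something smaller.
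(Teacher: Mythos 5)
Your proof is correct and follows exactly the route the paper indicates: the paper gives no written proof but attributes the statement to Hasse's Sum Theorem (Proposition~18.7a of \cite{Pierce}), and your argument — passing to the Brauer class of $\chi$ over $\Q(\chi)$, multiplying by the prime-to-$q$ part of $m(\chi)$ to isolate the $q$-primary invariants, and reading off both claims from $\sum_v \mathrm{inv}_v = 0$ — is the standard derivation from that theorem. The one delicate point you flag, that $\mathrm{inv}_v(\beta)$ has order exactly the $q$-part of $m_v(\chi)$, is handled correctly since $m_v(\chi)$ divides $m(\chi)$.
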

Note that when $v$ lies over the rational place $p$, $m_v(\chi) = m_p(\chi)$,
since $\Q_p(\chi)$ is a completion of $\Q(\chi)$, and
now apply Theorem~\ref{ANF-local}.

\subsection{Schur indices of $2$-group characters}
When $G$ is a $q$-group for $q$ an odd prime, the result of Roquette quoted as
part of Theorem~\ref{globalU} tells us that we have $m(\chi)=1$.

Now suppose that $G$ is a $2$-group. The quaternion group of order 8
has a character $\chi\in\Irr(G)$ with $m(\chi)=2$,
so Roquette's result does not generalise to $2$-groups.
We have seen that $m_p(\chi)=1$ for all odd primes $p$, $m_2(\chi)$ divides
$2$, and $\m_i(\chi)$ is determined by the Frobenius-Schur indicator.
We show how to determine $m_2(\chi)$.
\begin{lemma} \label{twogps}
Let $G$ be a finite $2$-group with $\chi\in\Irr(G)$.
If $\Q(\chi)=\Q$ then $m_2(\chi)=\m_i(\chi)$, otherwise $m_2(\chi) = 1$.
\end{lemma}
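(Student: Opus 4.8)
The plan is to handle the two cases separately, relying on the facts already assembled: $m_2(\chi) \mid 2$ (Theorem \ref{localU1}(2)), $\m_i(\chi)\in\{1,2\}$ determined by the Frobenius–Schur indicator (Theorem \ref{FS}), and the counting constraints of Theorem \ref{counting}. First suppose $\Q(\chi)\ne\Q$. Since $G$ is a $2$-group, every $\chi(g)$ lies in $\Q(N)$ with $N$ a power of $2$, so $\Q(\chi)$ is a subfield of a $2$-power cyclotomic field; as $\Q(\chi)\ne\Q$, the Benard–Schacher condition (Theorem \ref{globalU}(4)) is vacuous here, but more usefully I can look directly at the real place. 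If $\chi$ is not real-valued then $\R(\chi)=\C$ and $\m_i(\chi)=1$; if $\chi$ is real-valued but $\Q(\chi)\ne\Q$, then $\Q(\chi)$ is a real subfield of a $2$-power cyclotomic field, hence contains $\sqrt 2$ (it is the unique such field of degree $2$, or a larger one containing it). I want to conclude $m_2(\chi)=1$. The cleanest route is the counting theorem: if $m_2(\chi)=2$, then since $m(\chi)\mid N$-part considerations force $m(\chi)=2$ as well (all odd-$p$ indices are $1$ by Theorem \ref{localU1}(1) together with Theorem \ref{globalU}(6)/Roquette applied after Brauer–Witt localization, so the only possible contributions to $m(\chi)>1$ are at $p=2$ and at the real place), and then $|V_2|$ must be even and at least $2$. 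So $m_2(\chi)=2$ forces $\m_i(\chi)=2$ as well, i.e. the real place also contributes. I then need to see that having $\Q(\chi)\ne\Q$ makes this impossible, because a place over $2$ splits in $\Q(\chi)/\Q$ unless... — here the key input is that over $\Q_2$, adjoining $\sqrt 2$ (more generally any ramified quadratic, or the unramified one) kills the quaternion obstruction: the quaternion algebra $\left(\frac{-1,-1}{\Q_2}\right)$ becomes split over any proper extension of $\Q_2$ except possibly one, and one checks $\Q_2(\chi)\ne\Q_2$ whenever $\Q(\chi)\ne\Q$, using that $2$ does not split completely in a nontrivial $2$-power cyclotomic field. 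Combined with Theorem \ref{ANF-local}(2), $m_2(\chi)=\gcd(2,[\Q_2(\chi):\Q_2])\cdot(\text{something})$, and a nontrivial extension of degree $2$ (or higher) kills the index.

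For the converse direction, suppose $\Q(\chi)=\Q$. Then $\chi$ is rational-valued, so in particular real, and the indicator $i(\chi)$ is $\pm1$. If $i(\chi)=1$ then $\m_i(\chi)=1$; I must show $m_2(\chi)=1$ too. If $i(\chi)=-1$ then $\m_i(\chi)=2$; I must show $m_2(\chi)=2$. The statement to prove is thus the equality $m_2(\chi)=\m_i(\chi)$, and by Theorem \ref{counting} again, since $\Q(\chi)=\Q$ there is exactly one real place and the only finite place in play is $2$, so $V_2\subseteq\{2,\infty\}$; as $|V_2|$ must be even whenever it is nonempty (the $q^e=2$ case), we get $|V_2|\in\{0,2\}$, which says precisely that $m_2(\chi)=2$ if and only if $\m_i(\chi)=2$. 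This gives $m_2(\chi)=\m_i(\chi)$ directly, completing this case.

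I expect the main obstacle to be the first case — showing $m_2(\chi)=1$ when $\Q(\chi)\ne\Q$ — and within it the arithmetic point that a nontrivial subfield of a $2$-power cyclotomic field has a completion at $2$ that strictly extends $\Q_2$. This is where I would invoke Theorem \ref{ANF-local}(2): writing $K=\Q_2(\chi)$, we have $m_2(\chi)=m_K(\chi)=\gcd(2,[K:\Q_2])$ divided appropriately, so it suffices that $[K:\Q_2]>1$, i.e. that $2$ is not inert-to-trivial, equivalently that $\chi$ takes a value generating a proper extension of $\Q_2$. The subtlety is the borderline case $K=\Q_2(\sqrt{-1})=\Q_2(4)$, for which Theorem \ref{localU1}(2) explicitly records $m_K(\chi)=1$; the remaining ramified/unramified quadratic extensions of $\Q_2$ each also split the unique nonsplit quaternion algebra over $\Q_2$, so in all subcases $m_2(\chi)=1$. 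Handling these small field-theory cases carefully, and checking that $\Q(\chi)\ne\Q$ genuinely forces $\Q_2(\chi)\ne\Q_2$ (not merely $\Q(\chi)$ having a place of residue degree or ramification $>1$ somewhere, but specifically at $2$), is the delicate part; everything else follows formally from the cited theorems.
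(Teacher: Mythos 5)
Your second case ($\Q(\chi)=\Q$) is correct and is exactly the paper's argument: all odd finite places give index $1$, so $V_2\subseteq\{2,\infty\}$, and the evenness of $|V_2|$ from Theorem~\ref{counting} forces $m_2(\chi)=\m_i(\chi)$.

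The first case ($\Q(\chi)\ne\Q$) has a genuine gap. Your key step is that $\Q_2(\chi)\ne\Q_2$ forces $m_2(\chi)=1$ because ``a nontrivial extension of $\Q_2$ kills the quaternion obstruction'', writing $m_2(\chi)=\gcd(2,[\Q_2(\chi):\Q_2])\cdot(\cdots)$ via Theorem~\ref{ANF-local}(2). That theorem requires the base field $K$ to satisfy $K=K(\chi)$, so you may not take $K=\Q_2$ when $\Q_2(\chi)\ne\Q_2$. By Theorem~\ref{globalm}(1), $m_2(\chi)=m_{\Q_2(\chi)}(\chi)$ is the index of a central simple algebra \emph{over} $\Q_2(\chi)$, not of a $\Q_2$-central algebra base-changed up; the Brauer group of, say, $\Q_2(\sqrt2)$ still has an element of order $2$, so nothing formal rules out $m_{\Q_2(\chi)}(\chi)=2$. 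The paper's own Example~\ref{example2} shows the principle is false in general: there $\Q_2(\chi)$ is a degree-$4$ extension of $\Q_2$ and yet $m_2(\chi)=2$ (that group is not a $2$-group, which is exactly the point — the conclusion you want is special to $2$-groups and needs a $2$-group-specific input). The input the paper uses is that $2$ is totally ramified in $\Q(2^n)$, hence in $\Q(\chi)$, so $\Q(\chi)$ has a \emph{unique} place over $2$; Theorem~\ref{counting} then finishes both subcases: if the indicator is $0$ or $1$ the infinite places contribute nothing, so $|V_2|\le1$, contradicting $|V_2|>1$; if the indicator is $-1$ then $\Q(\chi)$ is totally real with $[\Q(\chi):\Q]$ real places (a nontrivial power of $2$, hence an even number) all in $V_2$, and adding the single dyadic place would make $|V_2|$ odd, contradicting evenness. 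Your sketch gestures at the counting theorem but never establishes uniqueness of the dyadic place — without it, Theorem~\ref{ANF-local}(1) would let two dyadic places satisfy the evenness condition on their own — and never runs the parity count in the indicator $-1$ subcase, falling back instead on the invalid local splitting argument.
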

\begin{proof}
The method here is as \cite{Yamada79}.
Note that $\Q(\chi)\subseteq\Q(2^n)$. Since $2$ is totally ramified in
$\Q(2^n)$ (\cite{LangANT} p73 Theorem~1), $2$ is totally ramified in
$\Q(\chi)$, so $\Q(\chi)$ has a unique place over $2$. It follows that
$m_2(\chi)$ may be determined from $\m_i(\chi)$ and $\Q(\chi)$ by using
Theorem~\ref{counting}.
We observe that the degree of $\Q(\chi)$ over $\Q$ is a power of $2$.

First suppose that $\Q(\chi)=\Q$. In this case
$m_2(\chi) = \m_i(\chi)$ since $\Q(\chi)$ has one place at $2$
and one at infinity.

Now suppose that $\Q(\chi)\not=\Q$.
If the indicator of $\chi$ is $0$ or $1$ then $\m_i(\chi)=1$,
and we have $m_2(\chi)=1$.
When the indicator is $-1$, $\m_i(\chi)=2$, $\Q(\chi)$ is totally real, and
has $[\Q(\chi):\Q]$ places at infinity. This degree is a
non-trivial power of $2$, hence is even, so we again have $m_2(\chi) = 1$.
\end{proof}

\section{Computing 2-adic Schur Indices}\label{sec:compute_m_p}
In this section we discuss a method to deal with the last remaining
problem when computing the $q$-part of $m_p(\chi)$ for $p,q$ rational
primes and $\chi\in\Irr(G)$, where $G$ is a \qe\ group.
We first summarise methods gathered to date.

If $G$ is \qe\ at $r$ then the Schur index of $\chi$ is a power of $r$, so
if $G$ is not \qe\ at $q$ then this $q$-part is $1$,
and we will now assume that $G$ is \qe\ at $q$.
When $p\not=q$, $G$ being \qe\ at $q$ implies
that the Sylow $p$-subgroup of $G$ is cyclic, and we may use the formula
from Theorem~\ref{BenardFeit} to compute $m_p(\chi)$.
We are reduced to the situation $p=q$.
When $p$ is odd, Theorem~\ref{localU1} implies that $m_p(\chi)=1$.

The last remaining problem is the case $p=q=2$. In \cite{Benard76} this case
is described as being ``much more complicated". During the 1970's various
special cases were dealt with (\cite{Lorenz, Yamada74, Yamada78} contain
examples).
We know by Theorem~\ref{localU1}
that $m_2(\chi)$ is either $1$ or $2$, with $1$ being the value when
$\fru\in\Q_2(\chi)$.
If $4\mid m(\chi)$, then $\fru\in\Q(\chi)$ by a result of Benard and
Schacher, and so $m_2(\chi) = 1$.
It follows that we may assume that the highest power of $2$ dividing $m(\chi)$
is either $1$ or $2$ and the full force of Theorem~\ref{counting} can be
applied, but this is not always enough to compute $m_2(\chi)$
(see Example \ref{example2}).
The article of Yamada \cite{Yamada82} gave a full solution to finding
$m_2(\chi)$. For our calculations we have chosen to use another solution.
We will use the methods of Schmid \cite{Schmid},
as corrected in Riese \& Schmid \cite{RieseSchmid}.
The rest of this section discusses these 2-adic methods.

After using the Lemma~\ref{subgpq} and reducing to a faithful character,
we may assume that we have the following situation.

\medskip\noindent
\club\ We have that $\chi\in\Irr(G)$ is faithful, and
for every proper subgroup $H<G$ and every $\eta\in\Irr(H)$,
$\ip<\eta^G,\chi>[\Q_2(\chi,\eta):\Q_2(\chi)]$ is divisible by 2.

\medskip\noindent
It turns out that condition \club\ implies that $m_2(\chi)$ only depends on
the isomorphism type of $G$. The cited works of Schmid and Riese \& Schmid
describe in detail those of these groups having $m_2(\chi) = 2$.

Let $Q_8$ denote the quaternion group of order
$8$, $Q_8 = \langle a,b\mid a^2=b^2, b^4, ba=ab^3\rangle$.
For an odd prime $r$, we say that a group $G$ is of type $(Q_8,r)$ when
the following all hold. 
The group $G$ is a split extension $G = U\split P$,
where $U$ has order $r$ and $P$ is a 2-group acting on $U$.
Putting $X=C_P(U)$, we require that $X\cong Q_8$, that $P$ is a central
product $P=XC_P(X)$, and that $P/X$ is non-trivial and cyclic.
We also require that the 2-part of the order of 2 modulo $r$ is equal
to $|P/X|$.
This definition was taken from \cite{Schmid} (3.5).

We need another class of groups. For odd prime $r$, we say that a group
$G$ is of type $(QD,r)$ when the following hold. The group $G$ is a split
extension $G= U\split P$, where $U$ has order $r$ and $P$ is a 2-group acting
on $U$.
Let $Z=P'$, the derived group of $P$. We require that $Z$ is cyclic of order
at least 4 and that $Y/Z$ is cyclic, where $Y$ is the centralizer in $P$ of an
element of order 4 in $Z$. Putting $X=C_P(U)$, we require that $X$ is not
abelian and $|P/X| = 2^s$, where $|P/Z| = 2^{s+1}$
(and $Z$ has index 2 in $X$). We further require that the order of
2 modulo $r$ has the form $2^sf$, where $f$ is an integer. We exclude from
type $(QD,r)$ groups where $f$ is odd and $X\cong Q_8$.
This definition is taken from \cite{RieseSchmid}, see \S4 Lemma~2 and \S5,
definition on p195.
The ``$QD$" in the name refers to the isomorphism type of $X$.
In these groups $X$ is either generalised quaternion or dihedral.

The following theorem is essentially a statement of results in
\cite{Schmid} and \cite{RieseSchmid}.
\begin{theorem}\label{twotwo}
Suppose that $G$ and $\chi$ satisfy \club. Then $m_2(\chi)=2$ if
and only if either $G\cong Q_8$ or there exists an odd prime $r$ such that
$G$ is of type $(Q_8,r)$ or of type $(QD,r)$.
\end{theorem}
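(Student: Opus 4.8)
The plan is to reduce Theorem~\ref{twotwo} to the detailed classification theorems of Schmid \cite{Schmid} and Riese \& Schmid \cite{RieseSchmid}, and to verify that condition \club\ is exactly the hypothesis under which those authors' group-theoretic descriptions apply. First I would recall that, since $G$ is \qe\ at $2$ (the case $2\nmid m_2(\chi)$ being trivial by Theorem~\ref{localU1}), the Brauer-Witt theory together with Lemma~\ref{subgpq} and Lemma~\ref{supergroup} tells us that condition \club\ forces $G$ itself to be the minimal \qe\ witness for the $2$-part of the Schur index; in particular $m_2(\chi)$ depends only on the isomorphism type of $(G,\chi)$, and in fact (since $m_2(\chi)$ is $1$ or $2$ and $\fru\notin\Q_2(\chi)$ when it is $2$) we are asking precisely when the quaternion algebra invariant at $2$ is non-split. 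The forward direction is then the easy one: for $G\cong Q_8$ the standard computation gives $m_2(\chi)=2$, and for $G$ of type $(Q_8,r)$ or $(QD,r)$ one exhibits the relevant faithful character of $G$ and cites the explicit invariant computations in \cite{Schmid} (3.5 and the surrounding results) and in \cite{RieseSchmid} (\S4--\S5), checking in each case that the character satisfies \club\ and that the $2$-local Schur index is $2$.

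The substantive direction is the converse: assuming \club\ and $m_2(\chi)=2$, one must show $G$ is $Q_8$ or of type $(Q_8,r)$ or $(QD,r)$. Here I would argue as follows. By Lemma~\ref{elementary} write $G = A\rtimes P$ with $A$ cyclic of odd order (the $q$-complement, $q=2$) and $P$ a $2$-group. Minimality under \club\ forces $A$ to be elementary — indeed cyclic of prime order $r$ — or $A=1$; in the latter case $G$ is a $2$-group and Lemma~\ref{twogps} together with the analysis of $2$-groups of \qe\ type pins $G$ down to $Q_8$ (any larger $2$-group either has $\fru$ in its character field or has a proper subgroup supplying an odd contribution, violating \club). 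When $A=U$ has prime order $r$, I would set $X=C_P(U)$ and analyse the structure of $P$ acting on $U$ via the arguments of Schmid: the character $\chi$ is induced from $A$ up through the various layers, the field $\Q_2(\chi)$ is controlled by the order of $2$ modulo $r$ and by $P/X$, and the Hasse invariant at $2$ can be written as a sum of a contribution from the $Q_8$- or dihedral part $X$ and a (split) contribution from the cyclic part $P/X$. Demanding that this invariant be $\tfrac12$ and that \club\ hold then forces exactly the numerical and structural conditions in the definitions of $(Q_8,r)$ and $(QD,r)$ — in particular the condition that the $2$-part of the order of $2$ mod $r$ equals $|P/X|$, the requirement that $X$ be non-abelian (generalised quaternion or dihedral), and the exclusion of the degenerate $f$-odd, $X\cong Q_8$ overlap case.

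The key steps, in order, are: (i) invoke Theorem~\ref{localU1} and the Benard--Schacher remark to reduce to $m_2(\chi)\in\{1,2\}$ with $\fru\notin\Q_2(\chi)$ in the relevant case; (ii) use Lemma~\ref{subgpq}, Lemma~\ref{supergroup} and Lemma~\ref{faithful} to see that \club\ makes $m_2(\chi)$ an isomorphism invariant of $G$ and that $G$ is a minimal such group; (iii) dispose of the $2$-group case via Lemma~\ref{twogps}, yielding $Q_8$; (iv) in the mixed case write $G=U\rtimes P$, show $|U|=r$ prime, set $X=C_P(U)$, and run the Hasse-invariant bookkeeping of Schmid / Riese--Schmid to extract the structural constraints defining $(Q_8,r)$ and $(QD,r)$; (v) conversely, for each of $Q_8$, $(Q_8,r)$, $(QD,r)$, produce the witnessing character and cite the invariant computations showing $m_2(\chi)=2$ and \club. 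The main obstacle I expect is step (iv): the case division between $X$ generalised quaternion and $X$ dihedral, the precise role of the derived group $Z=P'$ and the centraliser $Y$ of an order-$4$ element of $Z$, and the delicate exclusion of the $f$ odd, $X\cong Q_8$ case — this is exactly where \cite{Schmid} needed the correction supplied in \cite{RieseSchmid}, so I would lean heavily on those two references for the local-algebra computations rather than reproving them, and would be careful to state that Theorem~\ref{twotwo} is a repackaging of their results into the single clean criterion used by the algorithm.
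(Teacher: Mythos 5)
Your proposal is correct and follows essentially the same route as the paper: both directions are reduced to the classification results of Schmid and Riese--Schmid (the ``only if'' direction is exactly Riese--Schmid's Lemma~4 on dyadic Schur groups, and the ``if'' direction is Schmid's Theorem~4.3(5) for type $(Q_8,r)$ together with Riese--Schmid's Theorem~5 for type $(QD,r)$), so your plan to ``repackage'' rather than reprove their local computations is precisely what the paper does. The only small ingredient the paper makes explicit that you gloss over is the verification, in the $(QD,r)$ case, that \emph{every} faithful $\chi\in\Irr(G)$ has the form $\lambda^G$ for $\lambda$ a faithful linear character of the cyclic self-centralizing normal subgroup $H=UZ$, which is what lets the cited computation of $m_2(\lambda^G)$ cover all relevant characters.
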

\begin{proof}
Suppose that $m_2(\chi)=2$. In the terminology of \cite{RieseSchmid}, $G$ is
a dyadic Schur group. That $G$ has the form stated in the theorem is 
\cite{RieseSchmid} Lemma~4.  

Conversely, we claim that for every group $G$ of the given types,
and for every faithful $\chi\in\Irr(G)$, we have $m_2(\chi) = 2$.

If $G\cong Q_8$ then there is only one possibility for $\chi$ and it is well
known that this character has $m_2(\chi)=2$.
When $G$ is of type $(Q_8,r)$ it is shown in \cite{Schmid}
Theorem~4.3~(5) that every faithful irreducible character of $G$ has 
$m_2(\chi)=2$. 

Now suppose that $G$ is of type $(QD,r)$. As in the proof of
\cite{RieseSchmid} Theorem~5, and with
notation as in the above definition of this type, let $H = UZ$. Then $H$
is a cyclic normal subgroup of $G$ with $H = C_G(H)$. It is shown in the proof
of \cite{RieseSchmid} Theorem~5 that if $\lambda$ is a faithful linear
character of $H$ then $m_2(\lambda^G) = 2$. It is straightforward to check
that every faithful $\chi\in\Irr(G)$ is of the form $\chi = \lambda^G$,
where $\lambda$ is a faithful linear character of $H$, and we are done.
%
\end{proof}

We look more closely at the reduction to \club. 
This reduction goes through some sequence of pairs $(G_i,\chi_i)$, for
$i = 0\dots n$, where $(G,\chi) = (G_0, \chi_0)$ and $(G_n,\chi_n)$
satisfies \club. We will use Theorem~\ref{twotwo} to determine $m_2(\chi_n)$.
Once we have done this we want to determine $m_2(\chi_i)$ from
$m_2(\chi_{i+1})$ for each $i < n$.
For each $i < n$ we have one of two possibilities.

The first possibility is the quotient case, where $G_{i+1} = G_i/\ker\chi_i$
and $\chi_{i+1}= \hat\chi_i$ is the character of $G_{i+1}$ that lifts to
$\chi_i$.
In this case we have $m_2(\chi_i) = m_2(\chi_{i+1})$.

The other possibility is that $G_{i+1}$ is a subgroup of $G_i$,
$\chi_{i+1}\in\Irr(G_{i+1})$, and 
$$\ip<\chi_{i+1}^{G_i},\chi_i>[\Q_2(\chi_i,\chi_{i+1}):\Q_2(\chi_i)]$$
is odd.  In this case we have
$$m_2(\chi_i) = m_{\Q_2(\chi_i)}(\chi_{i+1}) =
\frac{m_2(\chi_{i+1})}
{\gcd(m_2(\chi_{i+1}), [\Q_2(\chi_i,\chi_{i+1}):\Q_2(\chi_{i+1})])}.$$
So $m_2(\chi_{i+1})=1$ gives $m_2(\chi_i)=1$, and
$d_i=[\Q_2(\chi_i,\chi_{i+1}):\Q_2(\chi_{i+1})]$ even will also give
$m_2(\chi_i)=1$. On the other hand, $d_i$ odd gives
$m_2(\chi_i) = m_2(\chi_{i+1})$.

While computing the reduction to \club\ we determine $d_i$ at each subgroup
step. If any $d_i$ is even, then the original $m_2(\chi)$ will be $1$,
so we can end the computation.
Otherwise the original $m_2(\chi)$ will be equal to the final $m_2(\chi_n)$.

\section{Algorithm sketch and examples} \label{sec:algsketch}
\begin{enumerate}
\item Determine $\m_i(\chi)$ and use Theorem~\ref{globalU} to find a bound
$u\in\N$ such that $m(\chi)$ divides $u$. If $u=1$ terminate.
\item For each rational prime $p$ dividing the order of $G$ use
Theorems~\ref{localU1}, \ref{localU2}, \ref{BenardFeit} and \ref{counting}
to find a $u_p$ dividing $u$ such that $m_p(\chi)$ divides $u_p$.
Set $u$ to be the LCM of $\m_i(\chi)$ and the $u_p$'s. If $u=1$ terminate.
\item For each prime $p$ with $u_p > 1$, and each $q$ with $q$ dividing $u_p$ do
the following steps.
\begin{enumerate}
\item Locate a \qe\ subgroup
$H\le G$ and $\eta\in\Irr(H)$ such that $q$ does not divide
$\ip<\eta^G,\chi>[\Q_p(\chi,\eta):\Q_p(\chi)]$.
\item Determine $m_p(\eta)$.
\item Divide $u_p$ by a suitable power of $q$ so that $q$-part of $u_p$ becomes
equal to
$$m_{\Q_p(\chi)}(\eta) =
\frac{m_p(\eta)}{(m_p(\eta),[\Q_p(\chi, \eta):\Q_p(\eta)])}.$$
\end{enumerate}
\item At this point each $u_p = m_p(\chi)$. Return $\m_i(\chi)$ and
the sequence of pairs
$\langle p, u_p\rangle$ where $u_p\ne1$.
\end{enumerate}

\subsection{An example}\label{example1}
We consider the degree 48 irreducible character $\chi$ of the Coxeter group
$G=H_4$. This character is rational with indicator 1.
After the first step above we have $\m_i(\chi)=1$ and $u = 2$. 
The order of the group is $2^8 3^2 5^2$, so we must determine local Schur
indices for $p=2,3,5$. We make progress in step 2 by noticing that the 3-block
containing $\chi$ has defect 1, hence has cyclic defect groups,
and Benard's formula gives $m_3(\chi) = 2$.
If all we want is the value of $m(\chi)$, we could stop
now with the answer 2. Otherwise we have $u=u_2=u_5=2$ with $\m_i(\chi)=1$ and
$m_3(\chi)=2$. Feit's character table upper bounds for $m_2(\chi)$ and 
$m_5(\chi)$ give us nothing more.

We do step 3 with $p = 5$ and $q = 2$. Our first attempt
is to search through cyclic subgroups of $G$ and see if they have the potential
to be a cyclic normal subgroup of some useful \qe\ group, as in \S3.
We find a self-centralizing cyclic group $A$ of order 12 with a faithful linear
character $\eta$ such that $\ip<\eta^G, \chi>=7$. We have 
$\Q_5(\eta) = \Q_5(12)$. From the power map of $G$ we see that
there is an element of $G$ that conjugates each element of $A$ to its 5th power.
If we compare this with the Galois group of $\Q_5(12)/\Q_5$,
which has order 2 and interchanges $\zeta_{12}$ and $\zeta_{12}^5$,
it follows that there is a \qe\ at 2 subgroup $U$ of $G$, with order
$24$, containing $A$ as normal subgroup, with $C_U(A) = A$, and
such that $\eta^U$ satisfies the conditions of Lemma~\ref{subgpq} with $q=2$
and ground field $F = \Q_5$.
(In detail, $C_U(A) = A$ implies that $\eta^U$ is irreducible,
$\Q_5(\eta^U) = \Q_5$, and
$\ip<(\eta^U)^G,\chi> = 7$.)
Since 5 does not divide the order of $U$
we have $m_5(\eta^U) = 1$, so $m_5(\chi) = 1$. Now Theorem~\ref{counting}
leaves no choice but $m_2(\chi)=2$, and we are done.

In this example Theorem~\ref{counting} determined $m_2(\chi)$.
If we did not use this result we would find that $G$ contains a subgroup $V$
of order 48, having $U$ as a subgroup of index 2, such that $\eta^V$ is
irreducible and rational, so satisfying Lemma~\ref{subgpq} with $q=2$ and
$F = \Q$ (and hence for all ground fields of interest to us).
In addition $\eta^V$ is faithful and $V$ is of type $(Q_8,3)$,
so $m_2(\eta^V) = 2$, giving $m_2(\chi) = 2$.

\subsection{A \qe\ example}\label{example2}
Let $G$ be the following group of order 272.
\begin{eqnarray*}
G=\langle\, a,b,c,d,e\mid a^2, b^2, c^2 = d, d^2, e^{17}, b^a = bc, c^a=cd,
c^b =cd, e^a=e^{-1}, \\
  (a,d), (b,d), (b,e), (c,e) \,\rangle.
\end{eqnarray*}
(Using \Magma, $G$ is obtained as \texttt{SmallGroup(272,15)}.)
Let $\chi$ be any faithful element of $\Irr(G)$. Then $\chi$ has degree 4,
indicator 1, and $\Q(\chi) = \Q(\zeta_{17}+\bar\zeta_{17})$.
The prime divisors of the order of $G$ are 2 and 17.
After step 2 we have upper bounds $u_2=u_{17}=2$ with all others $1$.
This $G$ is \qe\ at $2$ and the Sylow 17-subgroup of $G$ is cyclic.
We find that $m_{17}(\chi) = 1$ by using Benard's formula.
Theorem~\ref{counting} gives us no help in determining
$m_2(\chi)$ as $\Q(\chi)$ has two places over $2$. We find that condition
\club\ is satified by this $\chi$, and that $G$ is of type $(QD,17)$,
concluding that $m_2(\chi)=2$.

\subsection{ATLAS and non-ATLAS}\label{error}
We take $G$ to be a group of form $6.M_{22}.2$, a bicyclic extension of
$M_{22}$, where the outer $2$ inverts the generator of the cyclic $6$.
We first take the group featured in the ATLAS, where $G$ is a split extension
of $6.M_{22}$.
As $\chi$ we consider a faithful character of degree $420$ with $\Q(\chi) =\Q$.
This character has indicator $1$, so $\m_i(\chi)=1$ and $u=2$.
The prime divisors of the group order are $2,3,5,7,11$.
The Sylow $p$-subgroups for $p=5,7,11$ are cyclic
and we calculate that $m_p(\chi)=1$ for these $p$ using Benard's formula.
There is an $x\in G$ with order $33$ and $\chi(x) = -1$.
If $B$ is the $2$-block containing $\chi$, we have
$\Q(\psi(x):\psi\in\Irr(B)) = \Q(\sqrt{33}) \subseteq \Q_2$.
We apply Feit's bound (Theorem~\ref{localU2}) with $p=2$ to get
$m_2(\chi) = 1$. Now Theorem~\ref{counting} gives $m_3(\chi)=1$,
so $m(\chi) = 1$.

Now take $G$ to be the group of the given form that is isoclinic and not
isomorphic to the ATLAS group above.
In this group there are no outer elements of order 2, and an outer element of
order 4 squares to the element of order $2$ in the cyclic 6.
Take $\chi$ to be the character corresponding to the character above.
As $\chi$ is zero on outer elements we still have $\Q(\chi) = \Q$.
However, the indicator is now $-1$, so $\m_i(\chi) = 2$.
The same arguments show that $m_p(\chi) =1$ for
$p\ge5$ and $p=2$. This time Theorem~\ref{counting} gives $m_3(\chi)=2$.

\section{Some implementation details}\label{sec:algdetails}
In this section we give some details of how various parts of the algorithm
have been implemented.
\subsection{Cyclotomic extensions of $\Q_p$}\label{cyclogal}
During the course of a run through the algorithm, we must deal with fields of
the form $\Q_p$ extended by elements of $\Q(N)$ (particularly character values).
This need arises when using the Brauer-Witt Theorem, Benard's formula or
reducing to \club\ for instance.
We do this using the Galois groups of $\Q_p(N)/\Q_p$, their
subgroups, and the Galois correspondence. Note that we don't need a 
general algorithm for membership testing in subfields of $\Q_p(N)$, just for
elements of $\Q(N)$.

Let $N = p^km$, where $m$ is the $p'$-part of $N$.
The Galois group $\G(\Q_p(N)/\Q_p)$ is the direct product of
\begin{itemize}
\item the group of units modulo $p^k$, where $u$ takes $\zeta_{p^k}\mapsto
\zeta_{p^k}^u$, fixing $\zeta_m$, and
\item a cyclic group fixing $\zeta_{p^k}$ and with a generator taking
$\zeta_m\mapsto\zeta_m^p$.
\end{itemize}
To compute a subgroup corresponding to a field extension by an
element of $\Q(N)$, we use the orbit-stabilizer method.
The degree of an extension is the subgroup index, and deciding field
membership is done by testing if the element of $\Q(N)$
is fixed by the corresponding subgroup.  The implementation uses \Magma's
abelian group and cyclotomic field types for these calculations.

\subsection{Search for useful \qe\ subgroups}\label{subgroups}
This is a very important step in the algorithm.
When a search is needed it can take
most of the time for the computation. The search is currently in two phases.
The first is as in Example \ref{example1}, where we search the cyclic subgroups
hoping to use the power map to find one that can be extended to give a useful
\qe\ subgroup/character pair $H,\eta$ for the calculation.
This search requires $\eta$ to be induced from the cyclic subgroup.
In Example \ref{example1} we didn't have to construct
the subgroup to determine the Schur index we needed. Even when we find
suitable cyclic group and character we are not always so lucky.

There are characters where the search through cyclic groups finds nothing
useful.
If this happens we construct a set of subgroups of $G$, $\E_q$, consisting of
groups \qe\ at $q$, and including all maximal such subgroups of
$G$ up to conjugacy.
Let $\{C_i:i\in I\}$ be a set of conjugacy class representatives for all cyclic
subgroups of $G$ having $q'$ order. For each $i$ let $S_i$ be some
Sylow $q$-subgroup of $N_G(C_i)$. It is clear that $C_iS_i$ is
a subgroup of $G$ which is \qe\ at $q$.
Now put $\E_q = \{C_iS_i:i\in I\}$. It is straightforward to see that
some conjugate of every \qe\ at $q$ subgroup of $G$ is contained
in some element of $\E_q$.

If the cyclic group search gives us nothing useful then we search through
the groups in $\E_q$.
If none of these have an irreducible character $\eta$ as in Theorem~\ref{BW}
with $F = \Q$, we conclude that the $q$-part of $m(\chi)$ is 1.
Otherwise we have reduced to a group which is \qe\ at $q$.
In all cases except $p=q=2$ we can now compute the the $q$-part of
$m_p(\chi)$ as sketched at the beginning of \S\ref{sec:compute_m_p}.

If we are in the situation $p=q=2$, we have further reductions to make.
We reduce to a section of $G$ so that $\eta$ is faithful.
For these calculations we use a polycyclic presentation of the
group which allows easy representation of quotients as well as
subgroups. We further search for a maximal subgroup of $G$ and an
irreducible character of the subgroup, testing \club. 

\subsection{Finding $m_p(\chi)$}
The methods used here are covered in \S\ref{sec:compute_m_p}.
We organise the loop over $p$ so that $p=2$ comes last.
We only use Theorem~\ref{twotwo}
when other methods fail to give us $m_2(\chi)$. We make some points about
the computations needed when using Theorem~\ref{twotwo}.
First we must ensure that \club\ is satisfied.
Although this condition is stated in terms of
all proper subgroups of $G$, Lemma~\ref{supergroup} allows us
to check the condition for $H$ running over the maximal
subgroups of $G$ only.
If we find a maximal subgroup $H$ and $\psi\in\Irr(H)$ with
$\ip<\psi^G,\chi>[\Q_2(\chi,\psi):\Q_2(\chi)]$ odd, we consider
$d=[\Q_2(\chi,\psi):\Q_2(\psi)]$. If $d$ is even we have $m_2(\chi)=1$.
Otherwise we replace $G,\chi$ with $H,\psi$ and repeat with the maximal
subgroups of the new $G$.

Once we reach a state where \club\ holds (and so Theorem~\ref{twotwo} applies),
determining whether or not $G$ is $Q_8$ or is of type $(Q_8,r)$ or type
$(QD,r)$ is a simple exercise in computing with polycyclic groups,
testing the given definitions of these types.

Yamada's solution \cite{Yamada82} for the $p=q=2$ situation has been
programmed and tested alongside the solution given above.
It was reassuring that in all tests the two
methods have given the same answers, but the implementation of the Yamada
method has proved a little slower than the method given above.

\section{Tests}\label{sec:tests}
The program was tested against the tables of Schur indices of characters of
groups related to sporadic simple groups in Feit's
article \cite{Feit96}. The groups tested were all the decorated versions of 
24 of the sporadic simple groups, excluding $B$ and $M$. 

The groups used for computations were permutation and matrix groups
taken from \Magma's collection supplied by R.A. Wilson from \cite{Wilsonetal}, 
with \textit{ad hoc} adaptations to get groups isoclinic to the ATLAS groups.

The test was a calculation of the local Schur indices of all
characters of the groups.
Non-faithful characters were included, so in many cases these
gave two calculations with essentially the same characters, but in different
groups.

There were a small number of differences between our results and those of
\cite{Feit96}, the list follows. Each item in the list starts with a line
giving the group, character in Feit's notation, character degree,
character field, and Frobenius-Schur indicator.
Each character is faithful on the group named.
Following this line is the result of our calculation with a justification.

\begin{itemize}
\item $6.M_{22}.2i$, $48^*$, 420, $\Q$, $-1$. \newline
Feit states that $m(\chi) = 1$.
We found that $\m_i(\chi)=m_3(\chi)=2$ with all other local indices being $1$.
Our calculations are as in \S\ref{error}, the non-ATLAS case.

\item $3.McL.2$, $34^*$, $9\,504$, $\Q$, $1$. \newline
Feit states that $m(\chi) =1$.
The program finds $m_2(\chi)=m_3(\chi)=2$, with all others equal to 1.
In this case the cyclic subgroups search finds a \qe\ subgroup $H$,
with $|H|= 48$, and a faithful, rational character $\eta\in\Irr(H)$ having
$\ip<\eta,\chi_H>=805$ so that $m_3(\eta) = m_3(\chi)$.
Benard's formula gives $m_3(\eta) = 2$, Theorem~\ref{counting} 
gives $m_2(\chi)=2$.

\item $2.Suz.2$, $54^*=55^*$, $70\,200$, $\Q$, $1$.\newline
The program finds that $m(\chi) = 1$.
The tables on p249 of \cite{Feit96} give $m_3(\chi)=m_7(\chi)=2$, but this is
only an error in the table, as the argument Feit gives on p241 shows that
$m(\chi)=1$.

\item $2.Suz.2$, $56^* = 57^*$, $70\,200$, $\Q$, $-1$. \newline
Feit gives $\m_i(\chi)=m_3(\chi)=2$ while we find $\m_i(\chi)=m_7(\chi)=2$.
We have $\m_i(\chi) = 2$. If $x\in G$ has order $21$, then $\chi(x) = -1$ and
the class of $x$ is integral, so $m_p(\chi)=1$ for all $p\not=3,7,\infty$
(Theorem~\ref{localU2}) and $1\le m_3(\chi),m_7(\chi)\le2$.
The Sylow 7-subgroup of $G$ is cyclic and Benard's formula gives $m_7(\chi)=2$.
It follows from Theorem~\ref{counting} that $m_3(\chi)=1$.

\item $2.Suz.2i$, $62^*=63^*$, 159\,744, $\Q$, $-1$.\newline
Feit gives $\m_i(\chi)=m_3(\chi)=2$, while our program finds
$\m_i(\chi)=m_2(\chi)=2$, with $m_3(\chi) = 1$.
We confirm this result by taking $C$ to be
a cyclic group of order 24 in $2.Suz$, such that a generator maps to
class 12D in $Suz$. Let $S$ be the Sylow $2$-subgroup of the normalizer
of $C$ in $2.Suz.2i$, and let $H=CS$. Then $|H| = 2^6\cdot3$ and there is
a rational $\eta\in\Irr(H)$ with degree $4$ and $\ip<\eta,\chi_H>=6657$.
The Sylow 3-subgroup of $H$ is contained in $\ker\eta$,
so $\eta$ is a character of a 2-group. We deduce $m_3(\eta)=1$ and
then apply Lemma~\ref{twogps}. As $\eta$ is rational with indicator 
$-1$, we get $m_2(\eta)=\m_i(\eta)=2$.
It follows that $m_3(\chi)=1$ and $m_2(\chi)=2$.

\item $Ly$, $37$, 36\,887\,520, $\Q$, $1$. \newline
In \cite{Feit83} it is stated that all characters of $G$ have Schur index 1.
We find that $m_2(\chi) = m_7(\chi) = 2$, with all other $m_p(\chi)$
equal to 1. We have $\m_i(\chi)=1$.  Further, $m_p(\chi)=1$ for all
$p\not\in\{2,3,5,7,11\}$ as these are the prime divisors of $|G|/\chi(1)$.
The Sylow 7-subgroup of $G$ is cyclic, and Benard's formula gives
$m_7(\chi) = 2$. 
The values of $\chi$ on elements of order 11 and 21 ($\chi(x)=\pm 1$)
show that $m_3(\chi) = m_5(\chi) = m_{11}(\chi) = 1$.
Note that these elements do not make up integral classes, so the side
conditions of Feit's upper bound are non-trivial here.
We then must have $m_2(\chi) = 2$.

\item $3.ON.2$, $38^*$ and $39^*$, 105\,336, $\Q(\sqrt{7})$, $1$. \newline
Feit claims that these characters have Schur index 1, while
we find $m_3(\chi) = 2$.
Using the character table of $G$ and Theorem~\ref{localU2}, we find that
for all $p\not= 3$ we have $m_p(\chi) = 1$.
Note that the character field has two places over 3, and is a subfield
of $\Q_3$. Let $H$ be a maximal subgroup of $G$ with order 967\,680.
Let $\eta$ be the unique member of $\Irr(H)$ with degree 144.
The inner product of $\chi$ with $\eta^G$ is 21, and $\eta$ is rational,
so $m_3(\chi) = m_3(\eta)$.
The 3-block of $\eta$ has defect 1, and we compute $m_3(\eta) = 2$ using
Benard's formula.

\item $J_4$, $50$, 1\,579\,061\,136, $\Q$, $1$. \newline
Feit claims $m(\chi) = 1$, while we find $m_3(\chi) = m_5(\chi) = 2$.
Using the character table of $G$ and Theorem~\ref{localU2}, we find that,
for all $p\not= 3,5$ we have $m_p(\chi) = 1$. The Sylow 5-subgroup of
$G$ is cyclic, and Benard's formula gives $m_5(\chi) = 2$.
That $m_3(\chi) = 2$ now follows.
\end{itemize}

For the groups $B$, $2.B$ and $M$ the character tables in the GAP character
table library \cite{GAPChtrs} were used to check Feit's results.
No discrepancies were found.
It was fortunate that no subgroups were needed to decide all the Schur indices.

\bibliographystyle{plain}
\bibliography{schur_index}

\noindent
Address:\\[5mm]
School of Mathematics and Statistics\\
University of Sydney\\
NSW 2006\\
Australia\\
e-mail: william.unger@sydney.edu.au\\[5mm]

\end{document}